\documentclass[12pt]{article}
\usepackage[utf8]{inputenc}

\usepackage{amsmath, wrapfig}
\usepackage{dsfont, a4wide, amsthm, amssymb, amsfonts, graphicx}
\usepackage{fancyhdr, xspace, psfrag, setspace, supertabular, color}
\usepackage[colorlinks]{hyperref}
\usepackage{bbm}
\usepackage{stmaryrd}
\usepackage{txfonts}
\usepackage{xcolor} 
\usepackage{enumerate}  

\usepackage{geometry}
\geometry{
  left=1in,
  right=1in,
  top=1in,
  bottom=1in
}

\newtheorem{theorem}{Theorem}[section]
\newtheorem{proposition}[theorem]{Proposition}
\newtheorem{lemma}[theorem]{Lemma}
\newtheorem*{claim1*}{Claim 1}
\newtheorem*{claim2*}{Claim 2}
\newtheorem*{definition*}{Definition}
\newtheorem{corollary}[theorem]{Corollary}

\newtheorem{definition}[theorem]{Definition}

\newtheorem{question}[theorem]{Question}

\DeclareMathOperator{\sign}{sign}

\onehalfspacing

\def\e{\epsilon}
\def\E{\mathop{\mathbb{E}}}

\def\R{\mathbb{R}}

\def\N{\mathbb{N}}
\def\P{\mathbb{P}}

\def\a{\alpha}
\def\be{\beta}

\def\g{\gamma}
\def\d{\delta}

\def\b1{\mathbbm{1}}

\newcommand{\appropto}{\mathrel{\vcenter{
  \offinterlineskip\halign{\hfil$##$\cr
    \propto\cr\noalign{\kern2pt}\sim\cr\noalign{\kern-2pt}}}}}

\def \sp#1{\langle #1\rangle}

\title{A counterexample to a strengthening of a question of Milman}
\author{W. T. Gowers and K. B. Wyczesany}
\date{ }

\begin{document}

\maketitle

\begin{abstract}
    Let $|\cdot|$ be the standard Euclidean norm on $\R^n$ and let $X=(\R^n,\|\cdot\|)$ be a normed space.  A subspace $Y\subset X$ is \emph{strongly $\a$-Euclidean} if there is a constant $t$ such that $t|y|\leq\|y\|\leq\a t|y|$ for every $y\in Y$,  and say that it is \emph{strongly $\a$-complemented} if $\|P_Y\|\leq\a$, where $P_Y$ is the orthogonal projection from $X$ to $Y$ and $\|P_Y\|$ denotes the operator norm of $P_Y$ with respect to the norm on $X$. We give an example of a normed space $X$ of arbitrarily high dimension that is strongly 2-Euclidean but contains no 2-dimensional subspace that is both strongly $(1+\e)$-Euclidean and strongly $(1+\e)$-complemented, where $\e>0$ is an absolute constant. This example is closely related to an old question of Vitali Milman.
\end{abstract}
    
\section{Introduction}

A famous theorem of Dvoretzky \cite{Dvoretzky1961} asserts that for every positive integer $k$ and every $\e>0$ there exists a positive integer $n$ such that every normed space of dimension at least $n$ has a subspace $Y$ of dimension $k$ such that $d(Y,\ell_2^k)\leq 1+\e$, where $d$ is the Banach-Mazur distance.  A highly influential second proof of Dvoretzky's theorem was given by Milman \cite{Milman1971},  which exploited measure concentration and led to many other arguments based on the same fundamental idea.

Let us call an $n$-dimensional normed space $X$ $C$-\emph{Euclidean} if $d(X,\ell_2^n)\leq C$. A fairly straightforward use of Milman's method yields the following statement.
\begin{theorem} \label{strongdvoretzky} For every $C>1$ and every $\e>0$ there exists $c>0$ such that for every $n\in\N$, every $n$-dimensional $C$-Euclidean normed space $X$ has a subspace $Y$ of dimension at least $cn$ that is $(1+\e)$-Euclidean. 
\end{theorem}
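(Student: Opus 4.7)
The plan is to derive this from Milman's quantitative version of Dvoretzky's theorem, which for an arbitrary normed space $(\R^n,\|\cdot\|)$ produces a $(1+\e)$-Euclidean subspace of dimension at least $c_0\e^2 n (M/b)^2$, where $M=\int_{S^{n-1}}\|x\|\,d\sigma(x)$ is the average norm on the Euclidean sphere, $b=\max_{x\in S^{n-1}}\|x\|$, and $c_0>0$ is absolute. The entire content specific to the $C$-Euclidean hypothesis is then a lower bound on $M/b$ in terms of $C$.

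First I would normalise the Euclidean structure. Since $d(X,\ell_2^n)\leq C$, after composing with a suitable linear isomorphism we may assume that
\[
|x|\leq\|x\|\leq C|x|\qquad\text{for all }x\in\R^n.
\]
This is a pure change of coordinates and does not affect the conclusion, since the property of containing a $(1+\e)$-Euclidean subspace of proportional dimension is invariant under isomorphism. Under this normalisation, integration over $S^{n-1}$ gives $M\geq 1$, while the right inequality gives $b\leq C$, so $M/b\geq 1/C$.

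Next I would invoke Milman's theorem to produce a subspace $Y\subset X$ of dimension
\[
k\geq c_0 \e^2 n (M/b)^2 \geq \frac{c_0 \e^2}{C^2}\,n
\]
on which $(1-\e)M|y|\leq\|y\|\leq(1+\e)M|y|$ holds for every $y\in Y$. This sandwich yields $d(Y,\ell_2^k)\leq (1+\e)/(1-\e)$, which, by replacing $\e$ with a small constant multiple of the target tolerance, gives $(1+\e)$-Euclidean in the sense of the theorem. Setting $c=c_0\e^2/(3C)^2$ (or a similar explicit constant arising from the rescaling) finishes the deduction.

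The main obstacle is really just the invocation of Milman's theorem itself, since the rest of the argument is bookkeeping; the heavy lifting is the concentration-of-measure estimate on $S^{n-1}$ combined with a net on the Grassmannian, carried out in \cite{Milman1971}. The only genuinely new input here is the observation that for a $C$-Euclidean norm the ratio $M/b$ is bounded below by $1/C$, which is what upgrades the logarithmic dimension of the classical Dvoretzky theorem to a dimension linear in $n$.
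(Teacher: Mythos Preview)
Your argument is correct and is precisely the ``fairly straightforward use of Milman's method'' that the paper alludes to without spelling out: normalise so that $|x|\leq\|x\|\leq C|x|$, observe that $M/b\geq 1/C$, and feed this into Milman's quantitative Dvoretzky theorem to obtain a $(1+\e)$-Euclidean subspace of dimension proportional to $n$. The paper gives no further detail, so there is nothing to compare beyond this.
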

\noindent In other words, under the additional hypothesis that $X$ is $C$-Euclidean (and in fact under much weaker assumptions than this), one can obtain a linear dependence between the dimension of $Y$ and the dimension of $X$.

There is a large literature on finding `nice' subspaces of normed spaces under various conditions, but most of this literature pays little attention to how those subspaces sit in the main space. In particular, a desirable property for a subspace $Y\subset X$ is that it should be \emph{complemented}. In an infinite-dimensional context, one says that $Y$ is complemented if $Y=PX$ for a continuous projection $P$ on $X$. In a finite-dimensional context, we need a more quantitative definition: $Y$ is said to be $\a$-complemented if $Y=PX$ for a projection $P$ of operator norm at most $\a$.  

There are several open problems about the existence of  complemented subspaces. For example, it is not known whether there is a constant $C$ such that for every $k$ there exists $n_0$ such that for every $n\geq n_0$, every $n$-dimensional normed space has a $C$-complemented subspace of dimension at least $k$ and codimension at least $k$.  (For a partial result in this direction, see \cite{szarektomczak2009}.)

In this paper we consider the following question of Milman.
\begin{question} \label{milman}
Let $k\in\N$, let $C\in\R$, and let $\e>0$. Does there exist $n\in\N$ such that every $C$-Euclidean normed space $X$ of dimension at least $n$ has a $k$-dimensional subspace $Y$ that is $(1+\e)$-Euclidean and $(1+\e)$-complemented?
\end{question}
\noindent We do not answer the question, but we give a negative answer to a question that is sufficiently close to Milman's to suggest very strongly that Milman's question has a negative answer. 

To describe our result, we introduce two further definitions.  We shall write $|\cdot|$ for the standard Euclidean norm on $\R^n$.  Let us call a normed space $X=(\R^n,\|\cdot\|)$ \emph{strongly $\a$-Euclidean} if there is a constant $t$ such that $t|x|\leq\|x\|\leq\a t|x|$ for every $x\in X$, and let us call a subspace $Y$ of $X$ \emph{strongly $\a$-complemented} if the orthogonal projection $P_Y$ from $X$ to $Y$ has operator norm at most $\a$ (with respect to the norm $\|\cdot\|$ on $X$). The first definition is stronger than merely being $\a$-Euclidean, because instead of asking for any linear map $T$ such that $|Tx|\leq\|x\|\leq\a|Tx|$, we ask for $T$ to be a multiple of the identity, and the second is stronger than merely being $\a$-complemented because we ask for the projection to be orthogonal with respect to the standard inner product on $\R^n$.

These are natural strengthenings to consider, in the light of the fact that Milman's proof of Theorem \ref{strongdvoretzky} begins by observing that without loss of generality $X$ is strongly $C$-Euclidean and then proceeds to find a strongly $(1+\e)$-Euclidean subspace. Thus, one would expect Question \ref{milman} to have a positive answer if and only if the following question also has a positive answer.
\begin{question}\label{strongmilman}
Let $k\in\N$, let $C\in\R$, and let $\e>0$. Does there exist $n\in\N$ such that every strongly $C$-Euclidean normed space $X$ of dimension at least $n$ has a $k$-dimensional subspace $Y$ that is strongly $(1+\e)$-Euclidean and strongly $(1+\e)$-complemented?
\end{question}

Our main theorem is an example that shows that the answer to Question \ref{strongmilman} is negative.

\begin{theorem}\label{thm:counterexample}
There exist constants $\e>0$ and $C\in\R$ such that for all sufficiently large $n\in\N$ there is an $n$-dimensional normed space that is $C$-Euclidean but contains no 2-dimensional subspace that is both strongly $(1+\e)$-Euclidean and strongly $(1+\e)$-complemented.
\end{theorem}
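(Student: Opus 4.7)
The plan is to build $X=(\R^n,\|\cdot\|)$ with a norm of the form
\[
\|x\| = \max\Bigl(|x|,\; c\max_{v\in\mathcal{V}}|\langle v,x\rangle|\Bigr),
\]
for an absolute constant $c>1$ (say $c=2$) and a carefully chosen finite set of unit vectors $\mathcal{V}\subset S^{n-1}$. Cauchy--Schwarz gives $|x|\leq\|x\|\leq c|x|$, so $X$ is strongly $c$-Euclidean and we take $C=c$; geometrically, the unit ball is $B_2^n$ with a cap $\{|\langle v,x\rangle|>1/c\}$ sliced off for each $v\in\mathcal{V}$.

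Given a 2-dimensional subspace $Y\subset X$, set $w_v=P_Yv$, so the restriction of $\|\cdot\|$ to $Y$ is $\|y\|_Y=\max(|y|,c\cdot h_{\K_Y}(y))$, where $\K_Y=\mathrm{conv}\{\pm w_v:v\in\mathcal{V}\}\subset Y$ and $h_{\K_Y}$ is its support function. Requiring $Y$ to be strongly $(1+\e)$-Euclidean forces $\|y\|_Y/|y|$ to be essentially constant on the unit Euclidean circle of $Y$, which splits into two regimes: either (A) $\K_Y$ has circumradius at most $1/c$, so $\|\cdot\|_Y=|\cdot|$ and $\|P_Y\|_{X\to X}\leq 1$, making $Y$ automatically strongly 1-complemented; or (B) $\K_Y$ is a $(1+\e)$-approximate round disk of some radius $r\geq 1/c$. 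Regime (A) must be blocked, which we do by taking $\mathcal{V}$ dense enough on the sphere that $\max_v|P_Yv|>1/c$ for every 2-plane $Y$; a net of cardinality exponential in $n$ suffices.

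The heart of the argument is to rule out regime (B): assuming $\K_Y$ is a $(1+\e)$-round disk of radius $r\geq 1/c$, show $\|P_Y\|_{X\to X}>1+\e$. Using the identity $\|P_Y\|_{X\to X}=\sup_{e'\in Y}\|e'\|_Y/\inf_{z\in Y^\perp}\|e'+z\|$, pick $e'=w_{v^*}/|w_{v^*}|$ for some $v^*$ realizing the circumradius of $\K_Y$, so $\|e'\|_Y=cr$. Any lift $e=e'+z$ with $\|e\|\leq(1+\e)cr$ must satisfy $|e'+z|\leq(1+\e)cr$ together with the system $|\langle w_v,e'\rangle+\langle v^\perp,z\rangle|\leq(1+\e)r$ for every $v\in\mathcal{V}$, where $v^\perp=P_{Y^\perp}v$. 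By Chebyshev/LP duality, the infimum over $z\in Y^\perp$ of the slab-maximum equals $\max_\lambda\bigl|\sum_v\lambda_v\langle w_v,e'\rangle\bigr|$ over $\ell^1$-normalised signed measures $\lambda$ on $\mathcal{V}$ with $\sum_v\lambda_v v^\perp=0$. The near-circularity of $\K_Y$ concentrates many $w_v$'s in an angular cap around $w_{v^*}$, while the matching $v^\perp$'s occupy sufficiently generic positions in the $(n-2)$-dimensional $Y^\perp$ to prevent such a $\lambda$ from cancelling the payoff; the Euclidean bound then finishes the argument.

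The main obstacle is making the LP-dual lower bound uniform over all 2-planes $Y$ and all large $n$ so as to yield an absolute constant $\e>0$. The intended route is to take $\mathcal{V}$ either as a random set of sufficient cardinality (exponential in $n$) and verify both the covering condition blocking (A) and the "generic-position" condition blocking (B) by a union bound over an $\e$-net of the Grassmannian of 2-planes, or as an explicit deterministic net with quantitative isotropy properties. Establishing these two conditions simultaneously, with constants independent of $n$, is the principal technical difficulty.
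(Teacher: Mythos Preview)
Your proposal is an outline, not a proof: you explicitly identify the ``principal technical difficulty'' and leave it open. More seriously, the specific construction you suggest appears to contain an internal contradiction that cannot be resolved by the routes you name.

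The tension is this. To block regime (A) you need, for every 2-plane $Y$, some $v\in\mathcal V$ with $|P_Yv|>1/c$. With $c=2$ this forces $d(v,Y)<\sqrt 3/2$; since a random unit vector lies this close to a fixed 2-plane only with probability $e^{-\Theta(n)}$, the set $\mathcal V$ must have exponential cardinality and in effect form a net at some fixed scale $\delta$. But once $\mathcal V$ is a $\delta$-net of $S^{n-1}$, every unit vector $x$ has some $v\in\mathcal V$ with $\langle v,x\rangle\geq 1-\delta^2/2$, hence $\|x\|\geq c(1-\delta^2/2)$. Combined with $\|x\|\leq c|x|$, the norm $\|\cdot\|$ on \emph{all} of $\R^n$ is strongly $(1-\delta^2/2)^{-1}$-Euclidean, and then every subspace is automatically strongly $(1+O(\delta^2))$-Euclidean and strongly $(1+O(\delta^2))$-complemented. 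So the very density that rules out (A) forces both of the properties you want to forbid. A random $\mathcal V$ of the right cardinality is, for these purposes, also effectively a net and suffers the same fate; the LP-duality sketch for regime (B) never gets off the ground because there is nothing left to block.

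For comparison, the paper's construction is quite different in spirit. The norm is a \emph{sum}, not a max: $\|x\|=\langle x,Ax\rangle^{1/2}+\eta n^{-1/2}\|x\|_1$ with $A=I+P$ for a random rank-$n/2$ orthogonal projection $P$. The argument reformulates the strongly-Euclidean-and-complemented condition as every point of $Y$ being ``$\e$-good'' (its one-dimensional orthogonal projection has small norm), computes the support functionals, and shows that a good point must be close to an eigenspace of $A$ while its \emph{sign vector} must be close to the at-most-4-dimensional space $PY+QY$. One then uses randomness of $P$ to ensure that neither eigenspace is close to vectors with few distinct coordinates, and a combinatorial analysis of how many distinct sign patterns a 2-dimensional family can produce to reach a contradiction. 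The interaction is between the $\ell_1$ part (which supplies sign vectors) and the quadratic part (which supplies a random 4-dimensional target); there is no cap-slicing and no LP duality.
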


\noindent The rest of the paper is devoted to the proof, apart from a few remarks in the concluding section.

\section{Good vectors}

We start with a definition that allows us to reformulate in a convenient way the condition that $Y$ is strongly $(1+\e)$-Euclidean and strongly $(1+\e)$-complemented.

We write $\sp{\cdot,\cdot}$ for the standard inner product.  

\begin{definition}\label{egooddef} Let $X=(\R^n,\|\cdot\|)$ be a normed space and let $x\in X$. We say that $x$ is $\e$-\emph{good} if 
\[\langle x,y\rangle\leq(1+\e)\frac{\|y\|}{\|x\|}|x|^2,\]
for every vector $y\in\R^n$.\end{definition}

To see what this means geometrically, consider the orthogonal projection $P_x$ on to the 1-dimensional subspace of $\R^n$ generated by $x$. Writing $x'$ for the normalized vector $x/|x|$, this has the formula
\[P_xy=\langle x',y\rangle x'.\]
Hence, the operator norm of $P_x$ (as a map from $X$ to $X$) is the maximum of the quantity
\[\frac{\langle x',y\rangle\|x'\|}{\|y\|}=\frac{\langle x,y\rangle\|x\|}{|x|^2\,\|y\|}\]
over all non-zero $y\in\R^n$. It follows that $x$ is $\e$-good if and only if $P_x$ has operator norm at most $1+\e$.  Since the definition of an $\e$-good point $x$ does not depend on the norm of $x$, it is enough to consider unit vectors.  Let us write $S^n=\{x\in \R^{n+1}: |x|=1\}.$

We now show that a subspace $Y$ of a space $X$ is strongly $(1+\e)$-Euclidean and strongly $(1+\e)$-complemented for some small $\e$ if and only if every $y\in Y$ is $\d$-good for some small $\d$. 

\begin{lemma}\label{lem:good=compl+eucl}
Let $X=(\R^n,\| \cdot \|)$ be a normed space and let $Y\subset X$ be a subspace. 
\begin{enumerate}
\item If $Y$ is strongly $(1+\e)$-complemented and strongly $(1+\e)$-Euclidean, then every $y\in Y$ is $(2\e+\e^2)$-good.
\item If $\e\leq 1/9\pi^2$ and every point in $Y$ is $\e$-good, then $Y$ is strongly $(1+\e)$-complemented and strongly $(1+3\pi\sqrt{\e})$-Euclidean. 
\end{enumerate}
\end{lemma}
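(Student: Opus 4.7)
The plan is to prove the two parts separately. Part (1) is a direct algebraic manipulation, while part (2) splits into a quick complementation step followed by a chain argument in two dimensions.

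For part (1), fix $y\in Y$ and $z\in\R^n$. Since $y\in Y$, Cauchy--Schwarz gives $\langle y,z\rangle=\langle y,P_Y z\rangle\le|y|\,|P_Y z|$. The strongly $(1+\e)$-Euclidean hypothesis on $Y$ provides a constant $t$ with $|P_Y z|\le\|P_Y z\|/t$ and $1/t\le(1+\e)|y|/\|y\|$, while strong $(1+\e)$-complementation gives $\|P_Y z\|\le(1+\e)\|z\|$. Multiplying these yields $\langle y,z\rangle\le(1+\e)^2|y|^2\|z\|/\|y\|$, so $y$ is $(2\e+\e^2)$-good. For the complementation half of part (2), set $y=P_Y z$; orthogonality $z-y\perp Y\ni y$ gives $\langle y,z\rangle=|y|^2$, which substituted into the $\e$-good inequality for $y$ yields $\|P_Y z\|=\|y\|\le(1+\e)\|z\|$.

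For the strongly-Euclidean half, the ratio $\sup_{y\in Y}(\|y\|/|y|)/\inf_{y\in Y}(\|y\|/|y|)$ depends only on pairs of vectors, so one reduces to the case of a $2$-dimensional subspace $Z\subset Y$. Parametrize the Euclidean unit circle of $Z$ by $\theta$ and set $\rho(\theta)=\|y(\theta)\|$. Defining $\rho^*(\theta)=\sup_\phi|\cos(\theta-\phi)|/\rho(\phi)$, the $\e$-good hypothesis applied with test vectors drawn from $Z$ becomes $\rho(\theta)\rho^*(\theta)\le 1+\e$, or equivalently $\rho(\theta)|\cos(\theta-\phi)|\le(1+\e)\rho(\phi)$ for all $\theta,\phi$. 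Taking $\phi=\theta+\psi$ and logarithms,
\[
\bigl|\log\rho(\theta+\psi)-\log\rho(\theta)\bigr|\le\log(1+\e)+\log\sec\psi\le\e+\psi^2\qquad(|\psi|\le 1).
\]
Since $\rho$ has period $\pi$, its extrema $M,m$ are attained at angles with $D:=|\theta_M-\theta_m|\le\pi/2$. Telescoping the inequality over $N$ equal subintervals of $[\theta_m,\theta_M]$ gives $|\log(M/m)|\le N\e+D^2/N$; optimizing at $N\approx D/\sqrt\e$ produces $\log(M/m)\le 2D\sqrt\e\le\pi\sqrt\e$. The hypothesis $\e\le 1/(9\pi^2)$ forces $\pi\sqrt\e\le 1/3$, and the elementary bound $e^x\le 1+3x$ on $[0,1/3]$ then gives $M/m\le 1+3\pi\sqrt\e$, which is the strongly $(1+3\pi\sqrt\e)$-Euclidean conclusion.

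The hard part is the chain step above: comparing only the extremal directions via $(1+\delta)\cos\gamma\le 1+\e$ (with $\delta=M/m-1$ and $\gamma=\theta_M-\theta_m$) gives only $\delta=O(1)$ when $\gamma$ is close to $\pi/2$, and extracting the correct $O(\sqrt\e)$ rate requires exploiting the $\e$-good hypothesis at all intermediate angles simultaneously, which the integrated Lipschitz-type bound on $\log\rho$ does cleanly.
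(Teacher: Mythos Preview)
Your proof is correct and follows essentially the same strategy as the paper's. Part (1) and the complementation half of Part (2) are argued identically; for the strongly-Euclidean half of Part (2) both arguments chain along an arc in a $2$-dimensional section with step size of order $\sqrt{\e}$, balancing the linear accumulation of $\e$ against the quadratic angular cost $\log\sec\psi$---you package this as a direct telescoping bound on $\log\rho$, while the paper finds a single bad step by pigeonhole and derives a contradiction, but the underlying computation is the same.
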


Before we prove the statement, note that this characterization reduces Question \ref{strongmilman} to the following question.

\begin{question}\label{egoodquestion} Let $\e>0$, $C\geq 1$ and $k\in\N$. Does there exist $n$ such that if $\|\cdot\|$ is a norm on $\R^n$ such that $|x|\leq\|x\|\leq C|x|$ for every $x\in\R^n$, then the space $(\R^n,\|\cdot\|)$ has a subspace $Y$ of dimension $k$ such that every $y\in Y$ is $\e$-good?\end{question}

\begin{proof}[Proof of Lemma \ref{lem:good=compl+eucl}]
    Let $P_Y$ be the orthogonal projection onto $Y$. If $Y$ is strongly $(1+\e)$-Euclidean and strongly $(1+\e)$-complemented, then $\|P_Y x\| \le (1+\epsilon)\|x\|$ for every $x \in X$ and there exists $\lambda\in \mathbb{R}$ such that $ \lambda |y| \le \|y\| \le (1+\epsilon) \lambda |y|$ for every $y \in Y$. From this it follows that for every $y \in Y$ and every $x \in X$ we have 
    \[\left\langle y,x \right\rangle = \left\langle y,P_Y x \right\rangle  \le |y|\, |P_Y x| \le |y|\, \frac{1}{\lambda}  \| P_Y x\| \le (1+\epsilon) \lambda \frac{|y|^2}{\|y\|} \frac{1}{\lambda} (1+\epsilon) \|x\|  = (1+\epsilon)^2 \frac{\|x\|}{\|y\|}|y|^2, \]
    which implies that every point $y$ in $Y$ is $(2\epsilon + \epsilon ^2)$-good, as claimed.
    
    Conversely, assume that every point in $Y$ is $\epsilon$-good, so that for every $y \in Y$ and every $x \in X$ we have the inequality
    \[\langle y,x \rangle \le (1+\epsilon) \frac{\|x\|}{\| y\|} |y|^2.\]
    Choose $x\in X$. Then $P_Y x\in Y$, so
    \[|P_Yx|^2=\langle P_Yx,P_Yx\rangle=\langle P_Yx,x\rangle\leq (1+\e)\frac{\|x\|}{\|P_Yx\|} |P_Yx|^2,\]
and therefore $\|P_Yx\|\leq(1+\e)\|x\|$. It follows that $Y$ is strongly $(1+\e)$-complemented.   
    
     Now assume for a contradiction that the subspace $Y$ is not strongly $(1+a)$-Euclidean with $0<a$. In particular, this means that we can find two unit vectors $y,w \in Y$ such that $\|y\| = \|w\|(1 +a)$. Without loss of generality we may assume that $a\le 1/2$. 
     
    Let us consider a sequence of unit vectors $w=x_0, \,x_1 , \ldots , x_{m-1},\, x_m=y$ that are equally spaced along the shortest arc that joins $w$ to $y$ (which is unique, since $w$ cannot equal $-y$). By the pigeonhole principle there exists $i$ such that \[\|x_i\|(1+a)^{1/m} \le \|x_{i+1}\|.\]
         We shall choose $m$ to ensure that $x_i$ is a witness for $x_{i+1}$ not being $\e$-good. Indeed, if we assume that $m$ is at least $3\pi^2/a$ 
         then since the angle between $x_i$ and $x_{i+1}$ is at most $\pi/m$ we get that
\begin{align*}
    \langle x_{i+1},x_i\rangle\frac{\|x_{i+1}\|}{\|x_i\|\,|x_{i+1}|^2}&\geq\cos ( \angle x_i x_{i+1})(1+a)^{1/m}\geq\Bigl(1-\frac{\pi^2}{2m^2}\Bigr)\Bigl(1+\frac am - \frac{a^2}{2m}\Bigr)\\ &\geq1+\frac am-\frac{a^2m^2+\pi^2m+\pi^2a}{2m^3}\geq1+\frac a{2m}.
\end{align*}
    Here we used the fact that for $0<\gamma<1$ and $a>0$ we have that $(1+a)^\gamma \ge 1+\gamma a-\frac{\gamma(1-\gamma)}{2}a^2\ge  1+\gamma a-\frac{\gamma a^2}{2}$, and the assumptions that $0<a\le 1/2$ and $m\ge 3\pi^2/a$.
    
    It follows that the point $x_{i+1}$ is not $\frac a{2m}$-good. Therefore, if every point is $\e$-good, we must have that $\frac a{\lceil 6\pi^2/a\rceil}\leq\e$, which implies that $a\leq 3\pi\sqrt{\e}$. Thus, we find that $Y$ is strongly $(1+3\pi\sqrt{\e})$-Euclidean, which completes the proof. \end{proof}

Next, we give an equivalent condition for a point to be $\e$-good for some small $\e$.  Before we state the result, let us recall that a \emph{support functional} of a norm $\|\cdot\|$ at $x$ is any non-zero linear functional $f$ such that for every $y$ with $\|y\|\le \|x\|$ we have $f(y)\leq f(x)$.
Note that if the norm is differentiable, then writing $f(x)$ for $\|x\|$, we have that any multiple of $f'(x)$ is a support functional at $x$.

In the next proposition, we shall use the standard identification of $\R^n$ with its dual. That is, we identify a vector $z$ with the linear functional $y\mapsto\langle y,z\rangle$. 

\begin{proposition}\label{goodpoints}
Let $(X,\|\cdot\|)$ be a normed space and suppose that $|x|\leq\|x\|\leq C|x|$ for every $x\in X$. For every $\d>0$ there exists $\e>0$ such that if $x\in X$ is any  $\e$-good point, then there exist $y,z$ such that $|x|=|y|$, $|x-y|<\d|x|$, $z$ is a support functional for $y$, and $|y-z|<\d|x|$. Conversely, for every $\e>0$ there exists $\d>0$ such that $x$ is an $\e$-good point if there exist $y,z$ such that $|x-y|<\d|x|$, $z$ is a support functional for $y$, and $|y-z|<\d|x|$.
\end{proposition}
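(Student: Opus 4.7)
I will handle the two directions separately. For the easier direction ($\Leftarrow$), normalize $|x|=1$, so the hypotheses become $|x-y|<\d$, $|y-z|<\d$, and $z$ is a support functional at $y$. The support relation $\langle z,y\rangle=\|y\|\|z\|_*$ combined with the expansion $\langle z,y\rangle=|y|^2+\langle z-y,y\rangle$ and the estimate $|y|=1+O(\d)$ yields $\langle z,y\rangle=1+O(\d)$. Since $|\cdot|\le\|\cdot\|$ implies $\|\cdot\|_*\le|\cdot|$, the dual norm is $1$-Lipschitz in the Euclidean norm, so $\|y\|_*\le\|z\|_*+\d$. Therefore $\|y\|\|y\|_*\le\langle z,y\rangle+\d\|y\|=1+O(C\d)$, using $\|y\|\le C|y|$. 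Finally, since $\|\cdot\|$ is $C$-Lipschitz and $\|\cdot\|_*$ is $1$-Lipschitz in $|\cdot|$, the function $v\mapsto\|v\|\|v\|_*$ is $O(C)$-Lipschitz in $|\cdot|$, so $\|x\|\|x\|_*\le\|y\|\|y\|_*+O(C\d)\le 1+O(C\d)$. Choosing $\d$ small enough in terms of $\e$ and $C$ gives $\|x\|\|x\|_*\le 1+\e$, so $x$ is $\e$-good.

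For the harder direction ($\Rightarrow$), given $\e$-good $x$ with $|x|=1$ (so $\|x\|\|x\|_*\le 1+\e$), the goal is to find $y$ on the Euclidean unit sphere near $x$ at which some support functional of the norm is Euclidean-close to $y$. A natural candidate: take $\tilde y$ to be a maximizer of $\langle x,\cdot\rangle$ on the unit ball $\{v:\|v\|\le 1\}$, so $\|\tilde y\|=1$ and $\langle x,\tilde y\rangle=\|x\|_*$. Then $x$ lies in the outer normal cone at $\tilde y$, hence also at $y:=\tilde y/|\tilde y|$. Setting $z:=\lambda x$ with $\lambda=\|x\|_*/|\tilde y|$ (the scaling minimizing $|y-z|$), a direct calculation gives $|y-z|^2=1-(\|x\|_*/|\tilde y|)^2$ and $|x-y|^2=2-2\|x\|_*/|\tilde y|$. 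Both bounds then reduce to showing $|\tilde y|$ is close to $\|x\|_*$.

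The main obstacle is precisely showing $|\tilde y|\approx\|x\|_*$. Cauchy-Schwarz gives $|\tilde y|\ge\|x\|_*$, with equality only when $\e=0$. One must choose $\tilde y$ carefully within the exposed face $F=\{v:\|v\|=1,\ \langle x,v\rangle=\|x\|_*\}$ so as to minimize $|\tilde y|$, equivalently, to find the Euclidean projection of $\|x\|_*x$ onto $F$. The $\e$-good condition $\|x\|_*\|x\|\le 1+\e$ says that $\|x\|_*x$ lies just outside the unit ball of $\|\cdot\|$. A convex-geometric argument should then show that a nearest point of $F$ to $\|x\|_*x$ has Euclidean distance $O(\sqrt{\e})$ from $\|x\|_*x$, giving $|\tilde y|^2-\|x\|_*^2=O(\e)$, and hence $\d=O(\sqrt{\e})$. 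This square-root dependence matches the $\sqrt{\e}$-loss already observed in Lemma~\ref{lem:good=compl+eucl}(2).
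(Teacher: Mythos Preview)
Your argument for the converse direction ($\Leftarrow$) is correct and is essentially the paper's computation, phrased more compactly via the reformulation $\|x\|\,\|x\|_*\le 1+\e$.

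For the forward direction ($\Rightarrow$), however, your proposed construction does not work, and the gap you flag (``a convex-geometric argument should then show\dots'') cannot be filled as stated. You take $\tilde y$ in the exposed face $F=\{v:\|v\|=1,\ \langle x,v\rangle=\|x\|_*\}$, set $y=\tilde y/|\tilde y|$, and force $z$ to be a multiple of $x$. The problem is that the support functional witnessing the conclusion need not point in the direction of $x$ at all, and correspondingly the correct $y$ need not arise from $F$. Here is a concrete counterexample: in $\R^2$ take $\|v\|=\sqrt 2\,\|v\|_\infty$, so $|v|\le\|v\|\le\sqrt 2\,|v|$. For $x=(\cos\theta,\sin\theta)$ with $0<\theta<\pi/4$ one has $\|x\|\,\|x\|_*=\cos\theta(\cos\theta+\sin\theta)=1+\theta+O(\theta^2)$, so $x$ is $\e$-good with $\e\sim\theta$ arbitrarily small. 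But the face $F$ is the single vertex $(1/\sqrt 2,1/\sqrt 2)$, so your only candidate is $y=(1/\sqrt 2,1/\sqrt 2)$, which satisfies $|x-y|\to\sqrt{2-\sqrt 2}>0$ as $\theta\to 0$. The right choice here is $y=x$ and $z=(1,0)$: this $z$ is a support functional at $x$ with $|x-z|\approx\theta$, and it is \emph{not} a multiple of $x$.

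The paper's proof of this direction is genuinely different: it argues by contraposition. If every unit vector $y$ with $|x-y|<\d$ has its (Euclidean-normalised) support functional $z$ at distance $\ge\d$ from $y$, then the component of $z$ tangent to the sphere has size $\ge\d/2$ throughout that neighbourhood. One can then follow a path on the sphere along which $\|\cdot\|$ decreases at rate at least $\d/2$, producing a unit vector $\bar y$ with $|\bar y-x|\le\g$ but $\|\bar y\|\le\|x\|(1-\g\d/2C)$; taking $\g=\d/2C$ makes $\bar y$ a witness that $x$ is not $\d^2/8C^2$-good. The key idea you are missing is this gradient-descent step on the sphere, which searches over \emph{all} nearby $y$ rather than committing to a single candidate tied to the dual maximiser.
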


\begin{proof}
We shall do the second part first. Let $0<\e\leq 1$ and suppose that there exist $y,z$ such that $z$ is a support functional for $y$, and $|y-x|$ and $|z-y|$ are both at most $\d |x|$. 


Now let $w\in X$. Then
\[\langle w,x\rangle=\langle w,z\rangle+\langle w,y-z\rangle+\langle w,x-y\rangle\leq\langle w,z\rangle+2\d|w||x|.\]
But $z$ is a support functional for $y$, so 
\[\langle w,z\rangle\leq\|w\|\,\|z\|^*=\|w\|\frac{\langle y,z\rangle}{\|y\|}\]
We also have that
\[\|y\|\geq\|x\|-C|x-y|\geq\|x\|-C\d|x|\geq(1-C\d)\|x\|.\]
Finally, since $|x-z|\le 2\d |x|$ we have that $|z|\leq(1+2\d)|x|$, so
\[\langle y,z\rangle=\langle x,z\rangle+\langle y-x,z\rangle\leq |x||z|+\d|x||z|\leq(1+\d)(1+2\d)|x|^2.\]
Putting all this together, we find that
\[\langle w,x\rangle\leq\frac{(1+\d)(1+2\d)}{1-C\d}\frac{\|w\|}{\|x\|}|x|^2+2\d|w||x|\leq\Big(\frac{(1+\d)(1+2\d)}{1-C\d}+2C\d\Big)\frac{\|w\|}{\|x\|}|x|^2.\]
It can be checked that if we set $\d=\e/5C$, then the factor in brackets is at most $1+\e$.

For the other direction, assume that for all $y$ such that $|y|=|x|$ and $|x-y|<\delta |x|$ we have that $|y-z|>\delta|x|$, where $z$ is the support functional at $y$, chosen such that $|z|=|y|$.

We can assume that $|x|=1$ and that for every unit vector $y$ with $|y-x|<\d$, we have that $|y-z|\geq\d$. It follows that 
\[\langle y,z\rangle = 1-|y-z|^2/2\leq 1-\d^2/2\]
and therefore that
\[|z-\sp{y,z}y|^2=1-\sp{y,z}^2\geq \d^2-\d^4/4,\]
which implies that the component of $z$ orthogonal to $y$ has size at least $\sqrt 3\d/2\geq\d/2$. 

It follows that for any $\gamma<\delta$  we can find a path on the unit sphere that starts at $x$ and ends at a point at distance at least $\g$ from $x$ such that the norm $\|\cdot\|$ decreases at a rate of at least $\d/2$ along the path. This gives us a unit vector $\bar{y}$ such that $|\bar{y}-x|\leq\g$ and \[\|\bar{y}\|\leq\|x\|-\g\d/2\leq\|x\|(1-\g\d/2C).\]

It follows that $\langle x,\bar{y}\rangle> 1-\gamma^2/2$, so
\[\langle x,\bar{y}\rangle>\frac{(1-\gamma^2/2)}{(1-\g\d/2C)}\frac{\|\bar{y}\|}{\|x\|}|x|^2.\]
Setting $\g=\d/2C$, we deduce that $x$ is not $\d^2/8C^2$-good.
\end{proof}

\section{Definition of the norm and an important observation}


The norm has a fairly simple definition. Let $P$ be a random orthogonal projection of rank $n/2$ and let $A=I+P$. Then we define 
\begin{align}\label{definition of the norm}
    \|x\|=\langle x,Ax\rangle^{1/2}+\eta n^{-1/2}\|x\|_1,
\end{align}
where $\eta>0$ is an absolute constant to be chosen later. (Note that $|x|\leq\|x\|\leq(\sqrt 2+\eta)|x|$, so as long as $\eta\leq 2-\sqrt 2$, this norm is strongly 2-Euclidean.) The first part of this norm is a weighted $\ell_2$ norm with respect to a random orthonormal basis, where half the weights are 2 and half are 1, and the second is a multiple of the standard $\ell_1$ norm.  Our  aim now is to prove that with probability greater than zero (and in fact close to 1) there is no 2D subspace that consists entirely of $\e$-good points, for some absolute constant $\e>0$.  That is, we will prove Theorem \ref{thm:counterexample} and therefore give a negative answer to Question \ref{strongmilman}.

The next lemma tells us what the support functionals are at a vector $x$. Let us use the notation $\sign(t)$ for the multivalued function from $\R$ to $\R$ that takes $t$ to 1 if $t>0$, to -1 if $t<0$, and to any element of $[-1,1]$ if $t=0$. Then if $x\in\R^n$ we write $\sign(x)$ for the result of applying the multivalued function $\sign$ pointwise. Let us also write $\|x\|_A$ for $\langle x,Ax\rangle^{1/2}$.

\begin{lemma}\label{supportfnl}
The support functionals at $x$ are multiples of $\frac{Ax}{\|x\|_A}+\eta n^{-1/2}\sign(x)$.
\end{lemma}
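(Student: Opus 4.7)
My plan is to reduce the statement to a routine subdifferential calculation. The key observation is the standard convex-analysis fact that, for any norm $\|\cdot\|$ and any $x\neq 0$, the support functionals at $x$ (as defined just before Proposition \ref{goodpoints}) are exactly the positive scalar multiples of subgradients of $\|\cdot\|$ at $x$. So it suffices to identify $\partial\|\cdot\|(x)$.

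Since $A=I+P$ has eigenvalues $1$ and $2$, it is positive definite and $\|\cdot\|_A=\sp{\cdot,A\cdot}^{1/2}$ is a genuine Euclidean norm on $\R^n$, smooth off the origin with gradient $Ax/\|x\|_A$; hence $\partial\|\cdot\|_A(x) = \{Ax/\|x\|_A\}$. For the $\ell_1$ piece, the subdifferential of $\|\cdot\|_1$ at $x$ is the well-known multivalued map $\sign(x)$: the set of all vectors $s$ with $s_i=\sign(x_i)$ when $x_i\neq 0$ and $s_i\in[-1,1]$ otherwise. Both summands of the norm are everywhere finite continuous convex functions on $\R^n$, so the Moreau-Rockafellar sum rule applies and yields
\[\partial\|\cdot\|(x) = \frac{Ax}{\|x\|_A} + \eta n^{-1/2}\sign(x),\]
which is exactly the description in the lemma.

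If one prefers to avoid citing Moreau-Rockafellar, a direct verification is essentially one line. For any $s\in\sign(x)$ set $f = Ax/\|x\|_A + \eta n^{-1/2}s$. Then $f(x) = \|x\|_A + \eta n^{-1/2}\sp{s,x} = \|x\|_A + \eta n^{-1/2}\|x\|_1 = \|x\|$, because $\sp{s,x}=\sum_i s_i x_i=\|x\|_1$ by the definition of $\sign$. On the other hand, Cauchy-Schwarz in the $A$-inner product together with $|s_i|\le 1$ gives $f(y) \le \|y\|_A + \eta n^{-1/2}\|y\|_1 = \|y\|$ for every $y$, so $\|y\|\le\|x\|$ implies $f(y)\le f(x)$. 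This shows each such $f$ is a support functional; the reverse inclusion follows either from the same sum-rule argument or from a short extremality check, using that equality in both Cauchy-Schwarz and $\ell_1$/$\ell_\infty$ duality forces the claimed decomposition of $f$. There is essentially no obstacle here; the only non-trivial ingredient is positive definiteness of $A$, which is immediate from $A=I+P$.
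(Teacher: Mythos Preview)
Your proof is correct and follows essentially the same route as the paper: both identify the support functionals via the subdifferential of the norm, computed as the sum of the (smooth) gradient of $\|\cdot\|_A$ and the (multivalued) $\sign$ from the $\ell_1$ part. The paper carries this out via a direct first-order expansion showing that each claimed functional supports, which is exactly your ``direct verification'' paragraph; your invocation of Moreau--Rockafellar is a cleaner way to package the same calculation and has the advantage of immediately giving both inclusions, whereas the paper's argument as written establishes only that each such functional is a support functional (the reverse inclusion being implicit).
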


\begin{proof}
Essentially this is just a question of calculating the derivative of the norm, except that where the derivative is not defined we may have to give it several values (just as one might say that the derivative of $|x|$ at zero is any element of $[-1,1]$). 

Let $y$ be a sufficiently small vector. Then for any possible choice of $\sign(x)$, we have that
\[\|x+y\|\geq\|x\|+\frac{\langle Ax,y\rangle}{\|x\|_A}+\eta n^{-1/2}\langle\sign(x),y\rangle+o(y).\]
Therefore, if $y$ is orthogonal to some value of $\frac{Ax}{\|x\|_A}+\eta n^{-1/2}\sign(x)$, we have that $\|x+y\|\geq\|x\|+o(y)$, from which it follows easily that $\frac{Ax}{\|x\|_A}+\eta n^{-1/2}\sign(x)$ is a support functional at $x$.
\end{proof}


\begin{corollary}\label{near4d}
Let $Q$ be such that $P+Q=I$,  let $\e$ be sufficiently small and let $Y$ be a 2-dimensional subspace that consists entirely of $\e$-good points. Then for every unit vector $x\in Y$ there exists a unit vector $y$ with $|x-y|\le \delta$ and a value of $\sign(y)$ such that 
\[d\big(\eta n^{-1/2}\sign(y),PY+QY\big)\leq\d,\] 
where $\d$ tends to zero with $\e$. 
\end{corollary}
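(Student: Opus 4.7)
The plan is to combine Proposition \ref{goodpoints} with Lemma \ref{supportfnl} and then rearrange the identity that characterizes the support functional at $y$. Fix a unit vector $x\in Y$. Since $x$ is $\e$-good and our norm satisfies $|x|\le\|x\|\le(\sqrt 2+\eta)|x|$, Proposition \ref{goodpoints} yields a unit vector $y$ with $|x-y|<\d'$ together with a support functional $z$ at $y$ satisfying $|y-z|<\d'$, where $\d'\to 0$ as $\e\to 0$. By Lemma \ref{supportfnl}, for a suitable value of $\sign(y)$, the vector $z$ must be a positive scalar multiple of
\[
w \;=\; \frac{Ay}{\|y\|_A}\;+\;\eta n^{-1/2}\sign(y);
\]
write $z=\lambda w$ with $\lambda>0$.

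The second step is a quick estimate that $\lambda$ is bounded below by a positive absolute constant. Since $|y-z|<\d'$ and $|y|=1$, we have $|z|\in(1-\d',1+\d')$. On the other hand, because $A=I+P$ with $P$ an orthogonal projection, $\|y\|_A\ge|y|=1$ and $|Ay|\le 2|y|=2$, so $|Ay/\|y\|_A|\le 2$; also $|\eta n^{-1/2}\sign(y)|\le\eta$ since each coordinate of $\sign(y)$ lies in $[-1,1]$. Consequently $|w|\le 2+\eta$, which gives $\lambda=|z|/|w|\ge(1-\d')/(2+\eta)$. For $\d'$ small enough (i.e.\ $\e$ small enough), this is bounded below by, say, $1/(2(2+\eta))$.

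The final step is to rearrange $z=\lambda w$ to isolate the sign vector:
\[
\eta n^{-1/2}\sign(y)\;=\;\frac{z}{\lambda}\;-\;\frac{Ay}{\|y\|_A}\;=\;\frac{y}{\lambda}\;-\;\frac{Ay}{\|y\|_A}\;+\;\frac{z-y}{\lambda}.
\]
Using $y=Py+Qy$, we have $y/\lambda\in PY+QY$; and since $Ay=y+Py=2Py+Qy\in PY+QY$, we also have $Ay/\|y\|_A\in PY+QY$. Thus the first two terms on the right-hand side belong to the (at most 4-dimensional) subspace $PY+QY$, while the remaining error term has Euclidean norm at most $\d'/\lambda$, which is a constant multiple of $\d'$. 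Setting $\d$ to be the maximum of $\d'$ and this constant multiple of $\d'$ completes the proof.

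The argument is essentially mechanical once the setup is fixed; the only point requiring care is the lower bound on $\lambda$, which is what keeps the error term $\frac{z-y}{\lambda}$ from blowing up. Everything else is a direct consequence of the facts that $y$ and $Py$ both lie in the subspace $PY+QY$, and that the support functional direction given by Lemma \ref{supportfnl} splits cleanly into a ``Euclidean'' part (contained in $PY+QY$) and the $\ell_1$-part involving $\sign(y)$.
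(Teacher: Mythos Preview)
Your argument follows the same approach as the paper's: apply Proposition~\ref{goodpoints} and Lemma~\ref{supportfnl} to get $y$ close to $x$ and a support functional $z=\lambda w$ at $y$, bound $\lambda$ away from zero, and rearrange to isolate $\eta n^{-1/2}\sign(y)$. The paper writes the rearrangement as
\[
\Big(\tfrac{1}{\lambda}-\tfrac{2}{\|y\|_A}\Big)Py+\Big(\tfrac{1}{\lambda}-\tfrac{1}{\|y\|_A}\Big)Qy\;\approx_\d\;\eta n^{-1/2}\sign(y),
\]
which is your identity $y/\lambda-Ay/\|y\|_A$ split into its $P$- and $Q$-components.

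There is one slip worth flagging. You assert that $y/\lambda\in PY+QY$ and $Ay/\|y\|_A\in PY+QY$ on the grounds that $y=Py+Qy$. But Proposition~\ref{goodpoints} only gives $|x-y|<\d'$ with $x\in Y$; it does not place $y$ in $Y$, so $Py$ need not lie in $PY$ and $Qy$ need not lie in $QY$. The repair is immediate: since $x\in Y\subset PY+QY$ and $|y-x|<\d'$, we have $d(y,PY+QY)<\d'$, and since $Ax=2Px+Qx\in PY+QY$ and $|Ay-Ax|\le 2|y-x|<2\d'$, we have $d(Ay,PY+QY)<2\d'$. These extra errors are again constant multiples of $\d'$ (after dividing by $\lambda$ or $\|y\|_A\ge 1$), so they are absorbed into your final $\d$. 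With that correction the proof is complete and matches the paper's.
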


\begin{proof}
By Lemma \ref{supportfnl} and Proposition \ref{goodpoints} we find that if a unit vector $x$ is $\e$-good, then there exists a unit vector $y$, a value of $\sign(y)$, and a scalar $\mu$ such that $|y-x|<\d$ and 
\[\Big| \,\mu \Big(\frac{Ay}{\|y\|_A}+\eta n^{-1/2}\sign(y)\Big)-y\Big|<\delta/3,\]
where $\d>0$ tends to zero with $\e$. Since $|Ay|\leq\sqrt 2\|y\|_A$ and $n^{-1/2}|\sign(y)|\leq 1$, our assumption that $\sqrt 2+\eta\leq 2$ implies that $\mu\geq (1-\d/3)/2$.  For sufficiently small $\e$, we therefore have that $\mu\geq 1/3$.  

Recalling that $A=I+P$ and rearranging, we obtain the inequality
\[\Big(\frac 1\mu-\frac 2{\|y\|_A}\Big)Py + \Big(\frac 1\mu-\frac 1{\|y\|_A}\Big)Qy \, \approx_\delta \, \eta n^{-1/2} \sign(y),
\]
where we write $u\approx_\delta v$ to mean that $|u-v|\leq\delta$.  (We shall use this convenient notation throughout the rest of this paper.) \end{proof}


Corollary \ref{near4d} tells us that if we have a 2-dimensional subspace $Y$ that consists entirely of $\e$-good points, then every unit vector $x\in Y$ is close to a vector $y$ such that $\sign(y)$ is close to the subspace $PY+QY$, which has dimension at most 4. This is the main observation we shall use to obtain a contradiction.

\section{Outline of the proof and some technical lemmas}\label{outline}

Let us call a non-zero vector a \emph{sign vector} if all its coordinates have the same absolute value. As before, we write $X$ for $(\R^n,\|\cdot\|)$, though sometimes we abuse notation and use $X$ to refer simply to the vector space $\R^n$.

In order to show that the norm defined in \eqref{definition of the norm}  indeed constitutes a counterexample to Question \ref{strongmilman}, that is, that there is no two-dimensional subspace of $(\R^n, \| \cdot \|)$ that consists entirely of $\e$-good points, we shall obtain a contradiction using more precise versions of the following statements.

\begin{enumerate}
    \item Every $\e$-good point is close to $PX$ or $QX$.
    \item With high probability, no point that is close to $PX$ or $QX$ can be approximated by a point with only a few distinct coordinates.
    \item If $Y$ is a 2-dimensional subspace that consists entirely of $\e$-good points, then for every $x\in Y$ there exists $x'$ close to $x$ such that $\sign(x')$ is close to the subspace $PY+QY$.
    \item Using the first two statements, we deduce that the vectors $\sign(x')$ are not approximately contained in a 4-dimensional subspace. 
\end{enumerate}

Corollary \ref{near4d} is our precise version of Statement 3.  Let us now prove Statement 1, which is also fairly simple.

\begin{lemma}\label{closetoespace}  Let $x$ be an $\e$-good vector in $(X,\|\cdot\|)$, such that $|x|=1$. Then either $d(x,PX)\leq 3\d+2\eta$ or $d(x,QX)\leq 3\d+2\eta$, where $\d$ is given by Proposition \ref{goodpoints}.
\end{lemma}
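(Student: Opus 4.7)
The plan is to use the characterization of $\e$-good points from Proposition~\ref{goodpoints} together with the explicit formula for support functionals from Lemma~\ref{supportfnl}, and then exploit the fact that $A=I+P$ has distinct eigenvalues on $PX$ and $QX$. By Proposition~\ref{goodpoints}, since $x$ is $\e$-good we obtain a unit vector $y$ with $|x-y|\le\d$ and a support functional $z$ at $y$ with $|y-z|\le\d$. By Lemma~\ref{supportfnl}, $z=\mu\bigl(Ay/\|y\|_A+s\bigr)$ for some scalar $\mu$, where $s=\eta n^{-1/2}\sign(y)$ satisfies $|s|\le\eta$ (every coordinate of $\sign(y)$ lies in $[-1,1]$).

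Set $p=|Py|$ and $q=|Qy|$, so that $p^2+q^2=1$. Because $P$ is the orthogonal projection onto the eigenspace of $A$ with eigenvalue $2$, we have $Ay=2Py+Qy$ and $\|y\|_A^2=1+p^2$. The next step is to project the approximate identity $y\approx_\d z$ onto $PX$ and onto $QX$ separately. Since $P(Ay/\|y\|_A)=2Py/\sqrt{1+p^2}$ is parallel to $Py$ and $Q(Ay/\|y\|_A)=Qy/\sqrt{1+p^2}$ is parallel to $Qy$, and using $|Ps|,|Qs|\le\eta$, one obtains the two scalar inequalities
\begin{align*}
p\Bigl|1-\tfrac{2\mu}{\sqrt{1+p^2}}\Bigr|&\le\d+\mu\eta,\\
q\Bigl|1-\tfrac{\mu}{\sqrt{1+p^2}}\Bigr|&\le\d+\mu\eta.
\end{align*}

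Writing $\a=\mu/\sqrt{1+p^2}$, the identity $1=2(1-\a)-(1-2\a)$ gives $1\le 2|1-\a|+|1-2\a|$, so $\a$ cannot be simultaneously close to both $1$ and $1/2$. Consequently, if $\min(p,q)\ge a$ for some threshold $a$, then $a\le 3(\d+\mu\eta)$. A routine bound on $\mu$, using $|z|\le 1+\d$ together with $|Ay/\|y\|_A+s|\ge 1-\eta$ (since $|Ay/\|y\|_A|\ge 1$), shows that $\mu$ is essentially at most $1$; combining this with the trivial estimates $|Px-Py|,|Qx-Qy|\le|x-y|\le\d$, we conclude that either $|Qx|=d(x,PX)\le 3\d+2\eta$ or $|Px|=d(x,QX)\le 3\d+2\eta$.

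The main obstacle is not conceptual but bookkeeping: one has to track the dependence on $\mu$ and on the $\ell_1$ perturbation $s$ carefully enough to extract the stated constant rather than a larger multiple. The underlying geometric content --- that for this norm the support functional is essentially $Ay/\|y\|_A$, which is parallel to $y$ only when $y$ lies in an eigenspace of $A$ --- is what forces the dichotomy that either $Py$ or $Qy$ must be small.
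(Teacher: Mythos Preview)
Your approach is the same as the paper's: invoke Proposition~\ref{goodpoints} and Lemma~\ref{supportfnl} to obtain $y$ with $|x-y|\le\d$ and $y\approx_\d z=\mu\bigl(Ay/\|y\|_A+s\bigr)$, deduce that $y$ is an approximate eigenvector of $A$, and conclude that $y$ (hence $x$) is close to one of the eigenspaces $PX$, $QX$.

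Where the executions differ is exactly where your constants drift. First, the paper divides through by $\mu$ (using the lower bound $\mu\ge 1/3$ established in the proof of Corollary~\ref{near4d}) to obtain $\bigl|Ay/\|y\|_A-\lambda y\bigr|\le\d+\eta$ with $\lambda=1/\mu$; this replaces your $\mu\eta$ by a clean $\eta$ and makes the upper bound on $\mu$ irrelevant. Second, instead of your triangle-inequality step $1\le 2|1-\a|+|1-2\a|$, which costs a factor $3$, the paper works with the quadratic $|Ay-\nu y|^2=(2-\nu)^2p^2+(1-\nu)^2q^2$ and observes that if this is at most $\tau$ then $(p^2-q^2)^2\ge 1-4\tau$, hence $\min(p,q)^2\le 2\tau$. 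With $\tau=2(\d+\eta)^2$ this gives $\min(p,q)\le 2(\d+\eta)$, and adding $|x-y|\le\d$ yields exactly $3\d+2\eta$. Your route, as written, lands at roughly $4\d+3\mu\eta$ rather than the stated constant.
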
 

\begin{proof}
From the beginning of the proof of Corollary \ref{near4d} we obtain a unit vector $y$ such that $|x-y|\leq\d$ and such that 
\[\frac{Ay}{\|y\|_A}+\eta n^{-1/2}\sign(y)\approx_\delta \lambda y,\]
where we have written $\lambda$ for $1/\mu$ and used the fact that $\mu\geq 1/3$.

We have that 
\[d(x,PX)\le d(x,y)+d(y,PX)\le \d + |y-Py|=\d +|Qy|\] and similarly for $d(x,QX)$. Hence, our goal is to bound $\min\{ |Py|,|Qy| \}$. 

But $|Ay|\geq\|y\|_A$, and $\eta n^{-1/2}|\sign(y)|\leq\eta$, so $\lambda\geq 1-\eta-\d$ and
\[\left|Ay-\lambda\|y\|_A y\right|<(\d+\eta)\|y\|_A \le \sqrt{2}(\d+\eta).\]
Thus, $y$ is an approximate eigenvector of $A$ and it remains to prove that an approximate eigenvector of $A$ must be close to an eigenvector. (This is of course false for general linear maps.)

Since $P+Q=I$, we have $y=Py+Qy$  and $Ay=2Py+Qy$, so if $\nu$ is any scalar, then 
\[|Ay-\nu y|^2=(2-\nu)^2|Py|^2+(1-\nu)^2|Qy|^2.\]
Writing $2-\nu=a+1/2$ and $1-\nu=a-1/2$, one can rewrite the right-hand side as
\[\Big(a+\frac{|Py|^2-|Qy|^2}2\Big)^2+\frac 14\Big(1-(|Py|^2-|Qy|^2)^2\Big),\] 
from which we see that if $|Ay-\nu y|^2\leq\tau$ 
then 
\[(|Py|^2-|Qy|^2)^2\geq 1-4\tau,\]
which implies that either $|Py|^2\leq 2\tau$ or $|Qy|^2\leq 2\tau$. 
In our case, we may set $\tau= 2(\d+\eta)^2$, so $\min\{|Py|,|Qy|\}\leq 2(\d+\eta)$, which gives us the bound stated.
\end{proof}


Next, we formulate and prove a suitable version of Statement 2. We begin with a crude upper bound for the volume of the $\g$-expansion of the unit sphere of a subspace of dimension $cn$. (Much more accurate estimates exist, but for us a simple argument suffices.)

\begin{lemma}\label{subspacevol}
Let $m=cn$ and let $\g>0$. Assume that $2^{n+1}\g\geq 1$. Then the probability that a random unit vector has distance at most $\g$ from a given subspace $Y\subset X$ of dimension $m$ is at most $24^n\g^{n-m}$.
\end{lemma}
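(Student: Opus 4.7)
My approach would be a direct volume comparison, exploiting the fact that the $\gamma$-neighborhood $N_\gamma(Y)=\{x:d(x,Y)\le\gamma\}$ is invariant under shrinking toward the origin.

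First, I would reduce the spherical probability to a Euclidean volume. Let $A$ be the set of unit vectors $x\in\R^n$ with $d(x,Y)\le\gamma$, and let $C(A)=\{rx:r\in[0,1],\,x\in A\}$ be the cone over $A$ inside the unit ball $B^n$. The normalized uniform measure $\mu$ on unit vectors satisfies $\mu(A)=\mathrm{vol}(C(A))/V_n$, where $V_n$ is the Euclidean volume of $B^n$. Because $d(rx,Y)=r\cdot d(x,Y)\le\gamma$ whenever $r\in[0,1]$, one has $C(A)\subseteq N_\gamma(Y)\cap B^n$, so $\mu(A)\le \mathrm{vol}(N_\gamma(Y)\cap B^n)/V_n$.

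Second, I would bound $\mathrm{vol}(N_\gamma(Y)\cap B^n)$ using the orthogonal decomposition $\R^n=Y\oplus Y^\perp$. Any point in this set can be written $y+z$ with $y\in Y$, $z\in Y^\perp$, $|z|\le\gamma$, and $|y+z|\le 1$, which forces $|y|\le 1$. Hence
\[N_\gamma(Y)\cap B^n\;\subseteq\; B_Y\oplus\gamma B_{Y^\perp},\]
where $B_Y$ and $B_{Y^\perp}$ denote the Euclidean unit balls in $Y$ and $Y^\perp$ respectively. The right-hand side has volume exactly $V_m V_{n-m}\gamma^{n-m}$.

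Combining the two steps gives $\mu(A)\le (V_m V_{n-m}/V_n)\gamma^{n-m}$, and the final ingredient is the crude estimate
\[\frac{V_m V_{n-m}}{V_n}\;=\;\frac{\Gamma(n/2+1)}{\Gamma(m/2+1)\,\Gamma((n-m)/2+1)}\;\le\; 24^n,\]
which is the Gamma-function analogue of $\binom{n}{m}\le 2^n$ and follows from Stirling with enormous slack (the true order is at most $2^{n/2}$ times polynomial factors). The hypothesis $2^{n+1}\gamma\ge 1$ is not used in the volume argument itself; I read it as either a convenient normalization matched to how the lemma is applied later in the paper, or a safeguard against the trivial bound $\mu(A)\le 1$ being stronger. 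I do not anticipate any serious obstacle here; the only real bookkeeping is verifying that Stirling delivers a constant comfortably inside $24^n$.
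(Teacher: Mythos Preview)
Your argument is correct and takes a genuinely different route from the paper. The paper argues by covering: it takes a $\gamma$-net of the unit sphere of $Y$ of size at most $(3/\gamma)^m$, notes that every unit vector within $\gamma$ of $Y$ lies in a spherical cap of Euclidean radius $2\gamma$ about some net point, bounds each such cap's normalized measure by $(4\gamma)^{n-1}$, and then uses the hypothesis $2^{n+1}\gamma\geq 1$ to replace $(4\gamma)^{n-1}$ by $(8\gamma)^n$ so that the final arithmetic $(3/\gamma)^m(8\gamma)^n\leq 24^n\gamma^{n-m}$ closes. Your direct volume comparison via the orthogonal product $B_Y\times\gamma B_{Y^\perp}$ bypasses all of this: the inequality $V_mV_{n-m}/V_n=\Gamma(n/2+1)/\bigl(\Gamma(m/2+1)\Gamma((n-m)/2+1)\bigr)\leq 24^n$ holds unconditionally (indeed the ratio is of order at most $2^{n/2}$), so in your version the hypothesis on $\gamma$ is genuinely superfluous---your reading of it as a safeguard matched to the paper's later applications is exactly right, except that in the paper's own proof it is actually doing work. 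The covering argument has the small advantage of avoiding any Gamma-function bookkeeping, while your approach is cleaner and yields a slightly stronger statement.
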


\begin{proof}
A spherical cap in $S^{n-1}$ of Euclidean radius $2\g$ has volume at most $(4\g)^{n-1}\leq 8^n\g^n=(8\g)^n$. By standard estimates, we can also find a $\g$-net of $Y$ of cardinality at most $(3/\g)^m$. But every point of $S^{n-1}$ that is within $\g$ of $Y$ is within $2\g$ of a point in the $\g$-net, and from this the result follows.
\end{proof}


\begin{lemma}\label{nobadpoints}
Let $k$ be a positive integer, let $c,\g>0$, and let $Y$ be a random subspace of $\ell_2^n$ of dimension $m$. Then the probability that $Y_\gamma$ contains a unit vector $x$ with at most $k$ distinct coordinates is at most $(3/\g)^k(48k)^n\g^{n-m}$.
\end{lemma}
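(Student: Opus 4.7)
The approach is to cover the set of unit vectors with at most $k$ distinct coordinates by a small $\gamma$-net that lies in a manageable union of low-dimensional subspaces, and then to combine Lemma~\ref{subspacevol} with rotational invariance of the Grassmannian and a union bound.

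First I would make the structural observation that if $x\in S^{n-1}$ has at most $k$ distinct coordinate values, then there is a function $\pi\colon\{1,\ldots,n\}\to\{1,\ldots,k\}$ with $x_i=x_j$ whenever $\pi(i)=\pi(j)$. Each such $\pi$ determines a linear subspace $V_\pi\subset\R^n$ of dimension at most $k$ (one degree of freedom per non-empty level set of $\pi$), and there are at most $k^n$ choices of $\pi$. Hence every unit vector with at most $k$ distinct coordinates lies in the union $\bigcup_\pi (V_\pi\cap S^{n-1})$ of spheres of $k$-dimensional subspaces.

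Next, for each $\pi$ I would pick a $\gamma$-net $N_\pi$ of $V_\pi\cap S^{n-1}$ of cardinality at most $(3/\gamma)^k$, using the standard volumetric estimate inside a $k$-dimensional subspace. Setting $N=\bigcup_\pi N_\pi$, we get $|N|\leq k^n(3/\gamma)^k$. If $Y_\gamma$ contains a unit vector $x$ with at most $k$ distinct coordinates, then $x\in V_\pi\cap S^{n-1}$ for some $\pi$; moving at most $\gamma$ to a nearby point of $N_\pi$ produces an $x'\in N$ with $d(x',Y)\leq 2\gamma$.

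Finally, for each fixed $x'\in N$, the probability that a random $m$-dimensional subspace $Y$ satisfies $d(x',Y)\leq 2\gamma$ equals, by rotational invariance of the Haar measure on the Grassmannian, the probability that a uniformly random unit vector lies within $2\gamma$ of a fixed $m$-dimensional subspace. Lemma~\ref{subspacevol} applied with $2\gamma$ in place of $\gamma$ bounds this by $24^n(2\gamma)^{n-m}$, and a union bound over the points of $N$ then yields
\[
\P\bigl[Y_\gamma\text{ contains such an }x\bigr]\leq k^n(3/\gamma)^k\cdot 24^n(2\gamma)^{n-m}\leq (3/\gamma)^k(48k)^n\gamma^{n-m},
\]
after absorbing $2^{n-m}\leq 2^n$ into $(48k)^n$. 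The only delicate point is the combinatorial observation that vectors with at most $k$ distinct coordinates lie in a union of only $k^n$ subspaces of dimension $k$; everything after that is a routine net-plus-union-bound argument, with the slight bookkeeping that the $\gamma$-net costs us only a factor of $2$ in the radius when applying Lemma~\ref{subspacevol}.
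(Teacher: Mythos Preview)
Your proposal is correct and follows essentially the same approach as the paper: cover the vectors with at most $k$ distinct coordinates by the union of $k^n$ subspaces of dimension $k$, take a $\gamma$-net of each of size $(3/\gamma)^k$, and combine Lemma~\ref{subspacevol} at radius $2\gamma$ with a union bound to obtain $(3/\gamma)^k(48k)^n\gamma^{n-m}$. The only difference is that you make the rotational-invariance step explicit, which the paper leaves implicit.
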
 

\begin{proof}
The number of partitions of $\{1,2,\dots,n\}$ into $k$ sets is at most $k^n$, and for each partition $E_1,\dots,E_k$ the set of vectors that are constant on each $E_i$ is a $k$-dimensional subspace, so there is a $\g$-net of the unit sphere of this subspace of size at most $(3/\g)^k$.

The probability that $Y_\g$ contains a vector with at most $k$ distinct coordinates is at most the probability that $Y_{2\g}$ contains a point in one of these $\g$-nets, which is at most 
\[(3/\gamma)^k (24k)^n (2\gamma)^{n-m}\le (3/\gamma)^k (48k)^n \gamma^{n-m}\]
by Lemma \ref{subspacevol} and a union bound. 
\end{proof}

We present one more technical lemma that is similar to Lemma \ref{nobadpoints}, and which will be an important part of the argument. Again we make no attempt to optimize bounds. 

\begin{lemma}\label{largesupport}
Let $Z$ be a random subspace of $\ell_2^n$ of dimension $m$. Then the probability that $Z_\g$ contains a point with support size at most $r$ is at most $288^n\g^{n-m-r}$
\end{lemma}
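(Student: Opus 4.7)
The plan is to follow the template of Lemma \ref{nobadpoints} very closely, replacing the enumeration of coordinate partitions by an enumeration of small supports. A unit vector with support of size at most $r$ lies on the unit sphere of the coordinate subspace $V_S := \{x \in \R^n : \mathrm{supp}(x) \subseteq S\}$ for some $S \subseteq \{1,\dots,n\}$ of size at most $r$. There are at most $\sum_{j \leq r}\binom{n}{j} \leq 2^n$ such sets, and each $V_S$ is a subspace of dimension at most $r$, so its unit sphere admits a $\gamma$-net $N_S$ of cardinality at most $(3/\gamma)^r$ by the standard volumetric bound.

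If $Z_\gamma$ contains a unit vector $x$ with $|\mathrm{supp}(x)| \leq r$, then choosing $z \in N_{\mathrm{supp}(x)}$ with $|z - x| \leq \gamma$ gives $z \in Z_{2\gamma}$; so the event to be bounded is contained in the event that $Z_{2\gamma}$ intersects the union $\bigcup_{|S|\leq r} N_S$, which has at most $2^n(3/\gamma)^r$ elements. Applying Lemma \ref{subspacevol} in its dual form --- for any fixed unit vector $v$, the probability that $d(v, Z) \leq 2\gamma$ is at most $24^n(2\gamma)^{n-m}$, by rotational invariance of the uniform measure on the Grassmannian --- and taking a union bound over these candidate points gives
\[
2^n \cdot (3/\gamma)^r \cdot 24^n \cdot (2\gamma)^{n-m} \leq (2 \cdot 2 \cdot 3 \cdot 24)^n \, \gamma^{n-m-r} = 288^n \, \gamma^{n-m-r},
\]
where the inequality uses $3^r \leq 3^n$ (the statement is vacuous when $r \geq n$) and absorbs $2^{n-m} \leq 2^n$.

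Since the proof is a direct parallel of Lemma \ref{nobadpoints}, there is no genuine obstacle. The one thing worth checking is the bookkeeping of constants, and it comes out exactly: the factor $2^n$ from enumerating supports plays the role of the $k^n$ from enumerating partitions in the earlier proof, the $(3/\gamma)^r$ net factor is analogous to the $(3/\gamma)^k$ factor there, and the product $2 \cdot 2 \cdot 3 \cdot 24$ equals $288$, matching the stated base. The only subtlety worth a line in the writeup is that we are implicitly bounding the number of candidate supports by $2^n$ rather than something tighter like $\binom{n}{r}$; this is cruder than necessary but is the right thing to do to get a clean $288^n$ bound, and it mirrors the style of the preceding two lemmas where bounds are not optimized.
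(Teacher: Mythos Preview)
Your proof is correct and essentially identical to the paper's: both enumerate the at most $2^n$ possible supports, take a $\gamma$-net of size $(3/\gamma)^r$ on each coordinate subspace, apply Lemma~\ref{subspacevol} (via rotational invariance) to bound the probability that a fixed net point lies in $Z_{2\gamma}$ by $24^n(2\gamma)^{n-m}$, and then collect constants to get $288^n\gamma^{n-m-r}$. Your writeup is in fact a bit more explicit about the arithmetic than the paper's.
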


\begin{proof}
The number of sets of size at most $r$ is $\binom nr\leq 2^n$. 
For each such set $E$ the size of a $\g$-net of the unit sphere of the space of vectors supported on $E$ is at most $(3/\g)^{r}$, and for each point in such a net the probability that it is in $Z_{2\g}$ is at most $24^n(2\g)^{n-m}$. Therefore, the probability we wish to bound is at most
\[2^n(3/\g)^{r}24^n(2\g)^{n-m}\leq 288^n\g^{n-m-r},\]
which proves the lemma. 
\end{proof}

The key point we shall need from the above lemma is that for any $c>0$ there exists $\g>0$ such that if $n-m-r\geq cn$, then the probability that $Z_\g$ contains a point with small support is small. In particular, we have the following statement.

\begin{corollary}\label{pickgamma}
Let $\g=2^{-37}$. Let $X=\ell_2^n$ with $n\ge 2$, and let $P:X\to X$ be a random orthogonal projection of rank $n/2$ and let $Q=I-P$. Then the probability that either $PX_{2\g}$ or $QX_{2\g}$ contains a vector of support size at most $n/4$ is at most $\left( \frac{2}{3}\right)^{n}$. 
\end{corollary}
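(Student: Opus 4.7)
The strategy is to apply Lemma \ref{largesupport} separately to each of $PX$ and $QX$ and then combine the two bounds by a union bound, having chosen $\gamma$ small enough that the exponential factor $288^n$ coming from the net argument is swamped by the shrinkage $(2\gamma)^{n/4}$ with enough slack to land inside $(2/3)^n$.

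First, I would observe that both $PX$ and $QX$ are marginally distributed as uniformly random $n/2$-dimensional subspaces of $\R^n$: for $PX$ this is by assumption, and for $QX=(PX)^{\perp}$ it follows from the rotational invariance of the uniform measure on the Grassmannian. Hence Lemma \ref{largesupport} applies to each of them with $m=n/2$ and $r=\lfloor n/4\rfloor$, so that the exponent $n-m-r$ appearing in the bound is at least $n/4$.

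Next, applying Lemma \ref{largesupport} with expansion parameter $2\gamma$ in place of $\gamma$ yields
\[
\Pr\bigl[PX_{2\gamma}\text{ contains a vector of support size at most }n/4\bigr]\;\leq\;288^n\,(2\gamma)^{n/4},
\]
and the same bound for $QX_{2\gamma}$. Substituting $\gamma=2^{-37}$ gives $2\gamma=2^{-36}$, hence $(2\gamma)^{n/4}=2^{-9n}$; combined with $288<2^{9}=512$ this gives $(288/512)^n=(9/16)^n$ for each subspace.

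Finally, a union bound over the two possible offenders produces overall probability at most $2\cdot(9/16)^n$, which is at most $(2/3)^n$ since $(2/3)/(9/16)=32/27$ and $(32/27)^n$ quickly overtakes $2$. There is no conceptual obstacle: the entire content of the corollary is the arithmetic, and the exponent $37$ in the definition of $\gamma$ is calibrated precisely so that, after the shrinkage factor $(2\gamma)^{n/4}$ is absorbed, enough slack remains to swallow the net-counting factor $288^n$ and the union-bound factor $2$ while still fitting comfortably inside $(2/3)^n$.
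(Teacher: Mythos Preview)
Your proposal is correct and follows essentially the same approach as the paper: apply Lemma~\ref{largesupport} with $m=n/2$, $r=n/4$, and expansion radius $2\gamma$ to each of $PX$ and $QX$, obtain $(288/512)^n=(9/16)^n$ for each, and conclude by a union bound. The paper compresses the last two steps into the phrase ``the result follows with room to spare''; your write-up spells out the arithmetic more carefully. One small remark: the inequality $2\cdot(9/16)^n\le(2/3)^n$, equivalently $(32/27)^n\ge 2$, actually requires $n\ge 5$, so the phrase ``quickly overtakes $2$'' should be made quantitative---but this is equally unaddressed in the paper's proof and is immaterial since the construction is only used for $n\ge 35$ anyway (the hypothesis of Lemma~\ref{subspacevol} with radius $2\gamma=2^{-36}$ already forces this).
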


\begin{proof}
Applying Lemma \ref{largesupport}, we find that the probability that $PX_{2\g}$ contains a vector of support size at most $n/4$ is at most $288^n(2\g)^{n/4}=(288/512)^n$. The same is true of $QX_{2\g}$ and the result follows with room to spare.
\end{proof}

For the remainder of the paper, we shall assume that $P$ has been chosen in such a way that neither $PX_{2\g}$ nor $QX_{2\g}$ contains a vector of support size at most $n/4$, and neither $PX_\g$ nor $QX_\g$ contains a vector with at most five distinct coordinates. By Lemma \ref{nobadpoints} and Corollary \ref{pickgamma} such a $P$ exists.

\section{The set of signs cannot be squeezed into a 4-dimensional subspace}

Before we move to the heart of the argument, which will be a precise version of Statement 4, let us remark that as we move forward we shall be dealing with many parameters. Since we do not wish to choose them straight away we make sure that it is easy to keep track of all the dependencies by stating them clearly and giving each one a label.

If $Y$ is a 2-dimensional subspace that consists entirely of $\e$-good points, then the last formula in the proof of Corollary \ref{near4d} gives us for each unit vector $x\in Y$ a unit vector $y$ with $|x-y|\leq\d$ and coefficients $\a_y$ and $\be_y$ such that
\[n^{-1/2}\sign(y)\approx_{\d/\eta}\a_yPy+\be_yQy,\]
where $\a_y=\eta^{-1}(\lambda-2/\|y\|_A)$ and $\be_y=\eta^{-1}(\lambda-1/\|y\|_A)$. 

Recall also from the beginning of the proof of Lemma \ref{closetoespace} that we also have the equivalent formula
\[\eta n^{-1/2}\sign(y)\approx_\d\lambda y-\frac{Ay}{\|y\|_A},\]
from which it follows that 
$|\lambda|\in[1-\d-\eta,\sqrt 2+\d+\eta]$,  since $\|y\|_A\leq |Ay|\leq\sqrt 2\|y\|_A$. Therefore, provided that 
\begin{align}\label{cond2}
    \d+\eta\leq 2-\sqrt 2
\end{align}
it follows that $\lambda\in[\sqrt 2-1,2]$. In particular, it follows that 
$|\a_y|$ and $|\be_y|$ are at most $2\eta^{-1}$. This bound will be important later.  
Suppose now that 
\begin{align}\label{cond3}
    3\delta+2\eta\le\g.
\end{align}
If $x\in Y$, then by Lemma \ref{closetoespace}, either $d(x,PX)$ or $d(x,QX)$ is at most $3\d+2\eta$. Therefore, for every $y\in Y_\g$, either $d(y,PX)$ or $d(y,QX)$ is at most $3\d+2\eta+\g\le 2\g$, so by the assumption made at the end of the previous section, $y$ has support size at least $n/4$. That is, every vector in $Y_\g$ has support size at least $n/4$. We shall use this property frequently in the rest of the section.

Now let us choose non-negative real numbers $r_1,\dots,r_n$ and phases $\phi_1,\dots,\phi_n\in[0,2\pi)$ such that each unit vector in $Y$ is equal to 
\[x(\theta)=\Big(r_1\sin(\theta+\phi_1),\ldots ,r_n\sin(\theta+\phi_n)\Big)\]
for some $\theta\in[0,2\pi)$. Note that by looking at $\E_\theta|x(\theta)|^2$ we find that $\sum_ir_i^2=2$. 

\begin{lemma}\label{signcontinuity}
Let $\d,\xi>0$ $x$ and $y$ be two vectors in $\R^n$ such that $|x-y|\leq\d$. Then the number of $i$ such that $|x_i|\geq\xi n^{-1/2}$ and $\sign(x_i)\ne\sign(y_i)$ is at most $\xi^{-2}\d^2n$.
\end{lemma}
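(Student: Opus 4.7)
The plan is to argue pointwise: at each coordinate $i$ where $|x_i|\geq\xi n^{-1/2}$ but $\sign(x_i)\neq\sign(y_i)$, we get a lower bound on $(x_i-y_i)^2$, and then we sum.

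First, I would pin down the convention about $\sign$. Recall that $\sign$ is multivalued, taking any value in $[-1,1]$ at $0$. Since $|x_i|\geq \xi n^{-1/2}>0$, the value of $\sign(x_i)$ is unambiguous and equals $\pm 1$. The condition $\sign(x_i)\neq\sign(y_i)$ therefore means that $x_iy_i\leq 0$: either $y_i=0$ or $y_i$ has the opposite strict sign from $x_i$.

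Next, I would observe that in either case $(x_i-y_i)^2\geq x_i^2\geq \xi^2/n$. Indeed, if $y_i=0$ this is immediate, and if $x_i$ and $y_i$ have strictly opposite signs, then $|x_i-y_i|=|x_i|+|y_i|\geq |x_i|\geq \xi n^{-1/2}$.

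Let $I$ denote the set of indices $i$ satisfying the two conditions in the lemma. Summing the pointwise inequality over $i\in I$ gives
\[
|I|\,\frac{\xi^2}{n}\;\leq\;\sum_{i\in I}(x_i-y_i)^2\;\leq\;|x-y|^2\;\leq\;\d^2,
\]
so $|I|\leq \xi^{-2}\d^2 n$, which is exactly the claimed bound. There is no serious obstacle here; the lemma is a direct one-line energy estimate, with the only mild subtlety being the handling of the multivalued $\sign$ at zero, which is trivial since only coordinates of $x$ with $|x_i|\geq \xi n^{-1/2}$ are counted.
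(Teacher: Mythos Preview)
Your argument is correct and follows exactly the same approach as the paper's proof: a pointwise lower bound $(x_i-y_i)^2\geq \xi^2/n$ on the bad coordinates, summed and compared with $|x-y|^2\leq\d^2$. Your version is in fact slightly more careful about the multivalued $\sign$ at zero, but otherwise it is the same one-line energy estimate.
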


\begin{proof}
For each such $i$ we have that $|x_i-y_i|^2\geq\xi^2n^{-1}$, and our hypothesis is that $\sum_i|x_i-y_i|^2\leq\d^2$.
\end{proof}

Let $\a>0$ be a constant to be chosen later, let $E=\{i: \, r_i \ge \alpha n^{-1/2}\}$, and write $P_E$ for the coordinate projection on to $E$. It will also be convenient to write $E_0$ for $\{1,2,\ldots, n\}\setminus E$, which we think of as the set of coordinates where $Y$ almost vanishes.

\begin{lemma}\label{typical}
Let $0<c,\xi<1$ and let $\theta$ be chosen uniformly at random from $[0,2\pi)$. Then with probability at least $1-\xi/\a c$, the number of $i\in E$ such that $|x(\theta)_i|<\xi n^{-1/2}$ is less than $c|E|$.
\end{lemma}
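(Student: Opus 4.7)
The plan is to recognize this as a straightforward first-moment (linearity of expectation plus Markov) argument, once we understand each coordinate well enough. Since $x(\theta)_i = r_i \sin(\theta+\phi_i)$, the event ``$|x(\theta)_i| < \xi n^{-1/2}$'' for $i \in E$ is the event that $|\sin(\theta+\phi_i)| < \xi n^{-1/2}/r_i$, and by definition of $E$ this threshold is at most $\xi/\alpha$. So first I would reduce the lemma to a one-dimensional estimate on how often $\sin$ is small.

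Next, I would note that if $\theta$ is uniform on $[0,2\pi)$ then so is $\theta + \phi_i$ modulo $2\pi$, and I would compute the probability $p(t) := \P(|\sin\psi|<t)$ for $\psi$ uniform on $[0,2\pi)$. A direct calculation gives $p(t)=(2/\pi)\arcsin t$ for $t\in[0,1]$, and the convexity bound $\arcsin t \le (\pi/2) t$ yields $p(t)\le t$. Applied pointwise, this shows $\P(|x(\theta)_i|<\xi n^{-1/2}) \le \xi/\alpha$ for every $i \in E$.

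Finally, let $N(\theta)$ denote the number of $i\in E$ with $|x(\theta)_i|<\xi n^{-1/2}$. By linearity of expectation, $\E N(\theta) \le |E|\xi/\alpha$. By Markov's inequality,
\[
\P\bigl(N(\theta) \ge c|E|\bigr) \le \frac{|E|\xi/\alpha}{c|E|} = \frac{\xi}{\alpha c},
\]
so $N(\theta) < c|E|$ holds with probability at least $1-\xi/(\alpha c)$, as required. There is no real obstacle here; the only mildly non-trivial ingredient is the elementary bound $\arcsin t \le \pi t/2$ on $[0,1]$, and the case $\xi/\alpha \ge 1$ is vacuous since then $\xi/(\alpha c) \ge 1$.
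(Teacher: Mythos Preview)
Your proof is correct and follows essentially the same approach as the paper: reduce to the one-variable estimate $\P(|\sin\psi|<t)\le t$, then apply linearity of expectation and Markov. The paper phrases the elementary inequality as $\sin\theta\ge 2\theta/\pi$ on $[0,\pi/2)$ rather than $\arcsin t\le(\pi/2)t$, but these are equivalent, and your handling of the trivial case $\xi/\alpha\ge 1$ is a nice touch the paper omits.
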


\begin{proof}
Since $r_i\geq\a n^{-1/2}$ for every $i\in E$, and $x(\theta)_i=r_i\sin(\theta+\phi_i)$, we have that for each $i\in E$,
\[\P\Big[|x(\theta)_i|<\xi n^{-1/2}\Big]\le \P \Big[ |\sin(\theta+\phi_i)|< \xi/\alpha \Big]< \xi/\a.\]
To see the last inequality, note that $|\sin(\theta+\phi)|$ on the interval $[0,2\pi)$ has the same distribution as $\sin\theta$ on the interval $[0,\pi/2)$. But if $\theta$ belongs to that interval, then $\sin\theta\geq2\theta/\pi$ with equality only at 0 and $\pi/2$, so $\sin\theta<\xi/\a$ only if $\theta<\pi\xi/2\a$, which is true with probability less than $\xi/\a$. 
Therefore, the expected number of $i\in E$ such that $|x(\theta)_i|<\xi n^{-1/2}$ is less than $\xi|E|/\a$.

The result now follows from Markov's inequality.
\end{proof}

\iftrue
\else
\begin{lemma}\label{typical}
Let $c,\xi>0$ and let $\theta$ be chosen uniformly at random from $[0,2\pi)$. Then with probability at least $1-\pi\xi/2\a c$,
\[\Big|P_E\Big(\sign(x(\theta))-\sign(y)\Big)\Big|\leq 2(c+\xi^{-2}\d^2)^{1/2}n^{1/2}\]
for every $y$ such that $|x(\theta)-y|\leq\d$. 
\end{lemma}

\begin{proof}
Let $\theta$ be a random element of $[0,2\pi)$ and let $y$ be such that $|x(\theta)-y|\leq\d$. Let $A$ be the set of coordinates in $E$ on which $x(\theta)$ and $y$ have different sign. Then 
\[|x(\theta)-y|^2\geq\sum_{i\in A}r_i^2x(\theta)_i^2.\]
Also, since $r_i\geq\a n^{-1/2}$ for every $i\in E$, and $x(\theta)_i=r_i\sin(\theta+\phi_i)$, we have that for each $i\in E$,
\[\P\Big[|x(\theta)_i|\leq\xi n^{-1/2}\Big]\leq\pi\xi/2\a,\] 
so by Markov's inequality the probability that there are at least $c|E|$ such coordinates is at most $\pi\xi/2\a c$.

The number of coordinates such that $|x(\theta)_i|\geq\xi n^{-1/2}$ and $\sign(x(\theta))_i\ne\sign(y)_i$ is at most $\d^2n/\xi^2$, since $|x-y|\leq\d$. It follows that with probability at least $1-\pi\xi/2\a c$, 
\[|A|\leq(c+\xi^{-2}\d^2)n.\]
But 
\[\Big|P_E\Big(\sign(x(\theta))-\sign(y)\Big)\Big|=2|A|^{1/2},\]
so the result follows.
\end{proof}
\fi

For choices of $\xi$ and $c$ that we shall make later, let us call $\theta$ and $x(\theta)$ \emph{typical} if 
\begin{enumerate}[(i)]
    \item the number of $i\in E$ such that $|x(\theta)_i|<\xi n^{-1/2}$ is less than $c|E|$ (that is, the conclusion of Lemma \ref{typical} holds), and 
    \item there is no $i\in E$ with $\theta+\phi_i\in\{0,\pi\}$. 
\end{enumerate}
The second condition, which is there for convenience, holds with probability 1 and ensures that $\sign(x(\theta))_i\in\{-1,1\}$ for every $i\in E$. Later we shall want to be sure that typical vectors exist, for which Lemma \ref{typical} tells us that a sufficient condition is the inequality
\begin{align}\label{cond4}
    \xi<\alpha c.
\end{align}

Let us now define $\Sigma$ to be the set of all vectors of the form $n^{-1/2}P_E\sign(x(\theta))$ such that $x(\theta)$ is a typical element of $Y$, let $\beta>0$ be another parameter to be chosen later, and let $2k$ be the size of a maximal centrally symmetric $\be$-separated subset of $\Sigma$ (so $V$ consists of $k$ antipodal pairs). Note that $V$ is a $\be$-net of $\Sigma$.

Since $Y\subset PX_\gamma$ or $Y\subset QX_{\gamma}$  and we chose $P$  in such a way that both $PX_{2\gamma}$  and $QX_{2\gamma}$ does not contain a vector of support size at most $n/4$, we have that every vector in $Y$, and even in $Y_\gamma$,  has support size at least $n/4$.  Assuming that
\begin{align}\label{cond3a}
\a\leq\g,
\end{align}
it follows that for every $y\in Y$ its set of ``large'' coordinates $|E|$ has cardinality at least $n/4$, since $|y-P_Ey|<\a$ for every $y\in Y$ and,  clearly,  $P_Ey$ has support size $|E|$. 
Thus, $\Sigma$ consists of vectors in a sphere of radius $n^{-1/2}|E|^{1/2}\geq 1/2$, so as long as 
\begin{align}\label{cond5} \be\leq 1\end{align}
we can choose any typical vector $x(\theta)$, let $v=n^{-1/2}P_E\sign(x(\theta))$, and thereby obtain a $\be$-separated subset $\{v,-v\}$ of $\Sigma$. This proves that $k\geq 1$.

We now consider three cases depending on the size of $V$.

\subsection{Case 1: $k=1$.}

Let $\zeta=\xi/\a c$ and let $V=\{v,-v\}$. Since $\theta$ is typical with probability at least $1-\zeta$, every closed interval of length greater than $2\pi\zeta$ contains a typical $\theta$. 
If $n^{-1/2}P_E\sign(x(\theta))\approx _\be v$, then $n^{-1/2}P_E\sign(-x(\theta)) \approx _\beta -v$, so there is at least one $\theta$ such that $n^{-1/2}P_E\sign(x(\theta)) \approx _\be v$ and one such that $n^{-1/2}P_E\sign(x(\theta)) \approx _\be -v$. 

Let $A$ be the set of typical $x(\theta)$ such that $n^{-1/2}P_E\sign(x(\theta))\approx _\be v$ and note that $-A$ is the set of typical $x(\theta)$ such that $n^{-1/2}P_E\sign(x(\theta))\approx _\be -v$. Note also that every typical $x(\theta)$ belongs to $A$ or $-A$, and that therefore neither $A$ nor $-A$ is empty. Writing $B$ for the set of points at distance at most $\pi\zeta$ from $A$, we also have that $B$ and $-B$ are closed and that $B\cup -B$ is the entire unit circle of $Y$. To see the last assertion, let $x(\theta_0)$ be a point in the unit circle of $Y$. Then the closed interval of length $2\pi\zeta$ centred at $\theta_0$ contains a typical point $\theta$, so $x(\theta)$ belongs to $A\cup(-A)$ and therefore $x(\theta_0)\in B\cup -B$, as the distance from $x(\theta_0)$ to $x(\theta)$ is at most $\pi\zeta$. 
Since the unit circle is connected, $B\cap -B$ is non-empty, from which it follows that there exist typical unit vectors $x,y\in Y$ such that $|x-y|\leq 2\pi\zeta$ and such that $n^{-1/2}P_E\sign(x)\approx_\be v$, and $n^{-1/2}P_E\sign(y)\approx_\be-v$. 

It follows that $n^{-1/2}P_E\sign(x)$ differs from $v$ in at most $\be^2n/4$ coordinates, and $n^{-1/2}P_E\sign(y)$ differs from $-v$ in at most $\be^2n/4$ coordinates. Therefore, $P_E\sign(x)$ and $P_E \sign(y)$ are equal in at most $\be^2n/2$ coordinates.  
Moreover, by Lemma \ref{signcontinuity}, the number of $i$ for which $\sign(x_i)\ne\sign(y_i)$ and $|x_i|\geq \rho n^{-1/2}$ 
is at most $\rho^{-2}(2\pi\zeta)^2n=\left(\frac{2\pi\xi}{\alpha c\rho}\right)^2n$.

From these two facts it follows that the number of coordinates $i\in E$ for which $|x_i|\geq\rho n^{-1/2}$ is at most $\big(\left(\frac{2\pi\xi}{\alpha c\rho}\right)^2+\frac{\be^2}2\big)n$. 
Therefore, we find that $x$ has distance at most $\rho$ from a vector of support size at most $\big(\left(\frac{2\pi\xi}{\alpha c\rho}\right)^2+\frac{\be^2}2\big)n$.

If we choose parameters in such a way that 
\begin{align}\label{cond6}
\rho \le \g
\end{align}

and 
\begin{align}\label{cond7}
    \left(\frac{2\pi\xi}{\alpha c\rho}\right)^2+\frac{\be^2}2 <1/4
\end{align}
we obtain a contradiction with the fact that $Y_\g$ does not contain a vector of support size less than $n/4$.

\subsection{Case 2: $2\leq k\leq 4$}

Let $V=\{\pm v_1,\dots,\pm v_k\}$. Since each $v_i$ is of the form $n^{-1/2}P_E\sign(x(\theta))$ for some typical vector $x(\theta)\in Y$, it takes values $\pm n^{-1/2}$ in $E$. 

We now show that either this case can be reduced to the case $k=1$ with $\be$ replaced by $48\be$ or there is a subset $V'$ of $V$ consisting of at least two antipodal pairs such that $V'$ is a $3\kappa$-separated $\kappa$-net of $\Sigma$ and $\be\le \kappa \le 16\be$.

Since $V$ is a $\be$-net of $\Sigma$, then if it is $3\be$ separated then we are done. If not, we can find $i\ne j$ such that $|v_i-v_j|\leq 3\be$. 
Then we can remove $\pm v_j$ from $V$ and we will still have a $4\be$-net. Similarly, if $V'=V\setminus \{ \pm v_j\} $ is $12\be$-separated we are done, but if it contains two distinct elements $v_i,v_j$ such that $|v_i-v_j|\leq 12\be$, then again we can remove $\pm v_j$ and we will still have a $16\be$-net. Finally, if there are two distinct elements $v_i,v_j$ with $|v_i-v_j|\leq 48\be$, 
then we may remove $\pm v_j$ and end up with $V'$ of the form $\{v,-v\}$ and we are back in case $k=1$. However, now $\be$  is replaced by $48\be$, which we must allow for when choosing our parameters, so we need to strengthen condition (\ref{cond5}) to the condition
\begin{align}\label{cond8}
    \beta \le 1/48.
\end{align}

If the process stops before we reach $k=1$, then we have a $3\kappa$-separated $\kappa$-net $V'=\{\pm v_1,\dots,\pm v_m\}$ of $\Sigma$ such that $2\leq m\leq 4$  and $\kappa\leq 16\be$ as claimed. 

Recall that every interval of length greater than $2\pi\zeta$ contains a typical $\theta$, and hence by a connectedness argument similar to the one used for the case $k=1$ there must exist $\theta,\phi$, 
and $v_i\ne \pm v_j$ such that 
\[|\theta- \phi|\leq 2\pi\zeta,\] 
\[|n^{-1/2}P_E\sign(x(\theta))-v_i|\leq\kappa,\] 
and 
\[|n^{-1/2}P_E\sign(x(\phi))-v_j|\leq\kappa.\] 
Since $|v_i\pm v_j|\geq 3\kappa$, it follows that $n^{-1/2}|P_E\sign(x(\theta))-P_E\sign(x(\phi))|$ and $n^{-1/2}|P_E\sign(x(\theta))+P_E\sign(x(\phi))|$ are both at least~$\kappa$.
Now recall that for every $x\in Y$ we can find $y$ with $|x-y|\le \delta$ such that 
\[n^{-1/2}\sign(y)\approx_{\d/\eta}\a_yPy+\be_yQy\]
for some constants $\a_y$ and $\be_y$. Let us choose $y(\theta)$ and $y(\phi)$ that have this relationship with $x(\theta)$ and $x(\phi)$, respectively. 

By Lemma \ref{signcontinuity} and the assumption that $x$ is typical, the number of coordinates in $E$ such that $\sign(x(\theta))$ and $\sign(y(\theta))$ differ is at most $c|E|+\xi^{-2}\d^2n\leq(c+\xi^{-2}\d^2)n$, so 
\[n^{-1/2}|P_E\sign(x(\theta))-P_E\sign(y(\theta))|\leq 2(c+\xi^{-2}\d^2)^{1/2},\]
and similarly for $\phi$. If we choose parameters in such a way that
\begin{align}\label{cond9}
    c+\xi^{-2}\d^2\leq\be^4/256,
\end{align}
it follows that these distances are both at most $\be^2/8$.

\iftrue
\else
, and therefore that 
\[n^{-1/2}|\sign(y(\theta))-\sign(y(\phi))|\geq\be/3\]
and
\[n^{-1/2}|\sign(y(\theta))+\sign(y(\phi))|\geq\be/3.\]
Speaking more loosely, this shows that $\sign(y(\theta))$ and $\sign(y(\phi))$ are not roughly proportional to each other.
\fi

Let us write $\alpha(\theta)$ instead of $\alpha_{y(\theta)}$, and similarly for $\phi$.  Then
\[n^{-1/2}\sign(y(\theta))\approx_{\d/\eta}\alpha(\theta) Py(\theta)+\beta(\theta) Qy(\theta),\]
and
\[n^{-1/2}\sign(y(\phi))\approx_{\d/\eta}\alpha(\phi) Py(\phi)+\beta(\phi) Qy(\phi).\]

Also, since $|\theta-\phi|\leq 2\pi\zeta$, we have 
$|x(\theta)-x(\phi)|\leq 2\pi\zeta$ (because $|x(\theta)-x(\phi)|<|\theta-\phi|$), 
and therefore $|y(\theta)-y(\phi)|\leq 2\pi\zeta +2\d$. It follows that
\[|\alpha(\phi) Py(\phi)+\beta(\phi) Qy(\phi)-(\alpha(\phi) Py(\theta)+\beta(\phi) Qy(\theta))|\leq 2(\pi\zeta+\d)(|\alpha(\phi)|+|\beta(\phi)|).\] 
Now recall that $|\alpha(\phi)|$ and $|\beta(\phi)|$ are both at most $2\eta^{-1}$. We therefore obtain the approximation 
\[n^{-1/2}\sign(y(\phi))\approx_{\sigma}\alpha(\phi) Py(\theta)+\beta(\phi) Qy(\theta),\]
where
    $\sigma=\d/\eta+8(\pi\zeta+\d)/\eta=(8\pi\zeta+9\d)/\eta.$

It is convenient to encapsulate our knowledge so far as an approximate matrix equation
\[\begin{pmatrix}\alpha(\theta)&\beta(\theta)\\ \alpha(\phi)&\beta(\phi)\\ \end{pmatrix}\begin{pmatrix}Py(\theta)\\ Qy(\theta)\\ \end{pmatrix}\approx_\sigma n^{-1/2}\begin{pmatrix}\sign(y(\theta))\\ \sign(y(\phi))\end{pmatrix}\]
where by $\approx_\sigma$ in this context we mean that the approximation holds coordinatewise. 

The rough idea of what we shall now do is as follows. Because $\sign(y(\theta))$ and $\sign(y(\phi))$ are not roughly proportional to each other, the matrix $\begin{pmatrix}\alpha(\theta)&\beta(\theta)\\ \alpha(\phi)&\beta(\phi)\\ \end{pmatrix}$ is well-invertible, which allows us to deduce from the approximate matrix equation that $Py(\theta)$ and $Qy(\theta)$ can both be approximated by linear combinations of $n^{-1/2}\sign(y(\theta))$ and $n^{-1/2}\sign(y(\phi))$, with coefficients that are not too large. Therefore, $y(\theta)$ can as well, which implies that $x(\theta)$ can be. But $\sign(y(\theta))$ and $\sign(y(\phi))$ take at most two values each on almost all of $E$, and $x(\theta)$ is small outside $E$, so $x(\theta)$ can be approximated by a vector whose coordinates have at most five distinct values. Then we can obtain a contradiction from Lemma \ref{nobadpoints}. 

To carry out this argument we begin by making precise the statement that $\sign(y(\theta))$ and $\sign(y(\phi))$ are not roughly proportional.

\begin{lemma}\label{twosignvectors}
Let $u$ and $v$ be vectors in $\R^n$ that take values $\pm n^{-1/2}$ in a set $E$ of size $m$. Suppose that there are $r$ values in $E$ with $u_i=v_i$ and $s$ values with $u_i\ne v_i$. Then for every $\lambda\in\R$, $|u-\lambda v|\geq 2(rs/mn)^{1/2}$. 
\end{lemma}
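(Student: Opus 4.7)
My plan is to obtain the bound by simply restricting attention to the coordinates in $E$ and minimizing the resulting one-variable quadratic in $\lambda$.

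The key observation is that for indices $i \in E$, both $u_i$ and $v_i$ are $\pm n^{-1/2}$, so $u_i^2 = v_i^2 = 1/n$. Split $E$ into the set $E_+$ of size $r$ where $u_i = v_i$, and the set $E_-$ of size $s$ where $u_i = -v_i$. For $i \in E_+$ we have $(u_i - \lambda v_i)^2 = (1-\lambda)^2/n$, and for $i \in E_-$ we have $(u_i - \lambda v_i)^2 = (1+\lambda)^2/n$. Discarding the (non-negative) contribution from coordinates outside $E$ yields the inequality
\[
|u - \lambda v|^2 \;\geq\; \frac{r(1-\lambda)^2 + s(1+\lambda)^2}{n}.
\]

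It then suffices to minimize the numerator over $\lambda \in \R$. Differentiating and setting the derivative to zero gives $\lambda^{*} = (r-s)/m$, at which point $1-\lambda^{*} = 2s/m$ and $1+\lambda^{*} = 2r/m$, so the numerator equals
\[
r\cdot\frac{4s^2}{m^2} + s\cdot\frac{4r^2}{m^2} \;=\; \frac{4rs(r+s)}{m^2} \;=\; \frac{4rs}{m}.
\]
Substituting back gives $|u - \lambda v|^2 \geq 4rs/(mn)$, and taking square roots produces the claimed bound $|u - \lambda v| \geq 2(rs/mn)^{1/2}$. Since the argument works uniformly in $\lambda$, there is no real obstacle here — the only thing to keep an eye on is the degenerate cases $r=0$ or $s=0$, where the right-hand side is $0$ and the inequality is vacuous.
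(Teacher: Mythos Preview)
Your proof is correct and essentially identical to the paper's: both restrict to the coordinates in $E$, obtain $n|u-\lambda v|^2\geq r(1-\lambda)^2+s(1+\lambda)^2$, minimize over $\lambda$ at $\lambda=(r-s)/m$, and find the minimum value $4rs/m$. The only cosmetic difference is that the paper rewrites the quadratic as $(1+\lambda^2)m+2\lambda(s-r)$ before minimizing, whereas you plug in $1\pm\lambda^*=2s/m,\,2r/m$ directly.
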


\begin{proof}
We have that
\begin{align*}n|u-\lambda v|^2&\geq r(1-\lambda)^2+s(1+\lambda)^2\\
&=(1+\lambda^2)m+2\lambda(s-r).
\end{align*}
This is minimized when $\lambda=(r-s)/m$, and the minimum works out to be $4rs/m$. The lemma follows on dividing both sides by $n$ and taking the square root.
\end{proof}

We showed earlier that the distance between $n^{-1/2}P_E\sign(x(\theta))$ and $\pm n^{-1/2}P_E\sign(x(\phi))$ is at least $\kappa$, which by assumption is at least $\be$. 
It follows further that the conditions of Lemma \ref{twosignvectors} apply to $n^{-1/2}\sign(x(\theta))$ and $n^{-1/2}\sign(x(\phi))$ with both $r$ and $s$ at least  $\be^2 n/4$. 
Therefore, using the trivial bound $|E|\leq n$, we deduce that \[n^{-1/2}|\sign(x(\theta))-\lambda\sign(x(\phi))|\geq \be^2/2.\]

Therefore, using the fact that
\[n^{-1/2}|\sign(x(\theta))-\sign(y(\theta))|\leq\be^2/8\]
and
\[n^{-1/2}|\sign(x(\phi))-\sign(y(\phi))|\leq\be^2/8,\]
we find that
\[n^{-1/2}|\sign(y(\theta))-\lambda\sign(y(\phi))|\geq \frac{\be^2}{2}-\frac{\be^2}{8}(|\lambda|+1) \ge \be^2/8\] 
when $|\lambda|\leq 2$. 
In case $|\lambda|\geq 2$ we can instead use the bound 
\[(1+\lambda^2)m+2\lambda(s-r)\geq(|\lambda|-1)^2m\]
to deduce that 
\[n^{-1/2}|\sign(x(\theta))-\lambda\sign(x(\phi))|\geq \frac{\be^2}{2} (|\lambda|-1),\]
from which it follows that
\[n^{-1/2}|\sign(y(\theta))-\lambda\sign(y(\phi))|\geq \frac{\be^2}{2}(|\lambda|-1)-\frac{\be^2}{8}(|\lambda|+1),\]
which is again at least $\beta^2/8$.

Now that we have shown in a precise sense that $n^{-1/2}\sign(y(\theta))$ and $n^{-1/2}\sign(y(\phi))$ are not approximately proportional to each other, we turn to deducing that the matrix $\begin{pmatrix}\alpha(\theta)&\beta(\theta)\\ \alpha(\phi)&\beta(\phi)\\ \end{pmatrix}$ is well-invertible, by which we simply mean that its determinant is not too small.

\begin{lemma}\label{findlambda}
Let $A=\begin{pmatrix}a&b\\ c&d\\ \end{pmatrix}$ be a $2\times 2$ real matrix,  let $x$ and $y$ be vectors in a Euclidean space such that $\langle x,y\rangle=0$, and let
\[\begin{pmatrix}u\\ v\\ \end{pmatrix}=\begin{pmatrix}a&b\\ c&d\\ \end{pmatrix}\begin{pmatrix}x\\ y\\ \end{pmatrix}.\]
Then there exists $\lambda$ such that $|u-\lambda v|\leq\frac{|x|\,|y|\,|\det(A)|}{|v|}$
\end{lemma}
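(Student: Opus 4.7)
The plan is to reduce the problem to a simple one-variable quadratic minimization, using the orthogonality of $x$ and $y$ to split the norm into two squared pieces.

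First I would expand $u - \lambda v = (a - \lambda c) x + (b - \lambda d) y$. Since $\langle x, y \rangle = 0$, the Pythagorean identity gives
\[|u - \lambda v|^2 = (a - \lambda c)^2 |x|^2 + (b - \lambda d)^2 |y|^2.\]
This is a convex quadratic in $\lambda$. Differentiating (or completing the square) shows that the minimizer is
\[\lambda^* = \frac{a c |x|^2 + b d |y|^2}{c^2 |x|^2 + d^2 |y|^2} = \frac{ac |x|^2 + bd |y|^2}{|v|^2},\]
where in the last equality I again use $\langle x, y\rangle = 0$ to write $|v|^2 = c^2 |x|^2 + d^2 |y|^2$.

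Next I would plug $\lambda^*$ back in. A short calculation shows
\[a - \lambda^* c = \frac{d |y|^2 (ad - bc)}{|v|^2}, \qquad b - \lambda^* d = -\frac{c |x|^2 (ad - bc)}{|v|^2},\]
so
\[|u - \lambda^* v|^2 = \frac{\det(A)^2\, |x|^2 |y|^2 \bigl(d^2 |y|^2 + c^2 |x|^2\bigr)}{|v|^4} = \frac{\det(A)^2\, |x|^2 |y|^2}{|v|^2},\]
which gives the desired bound (in fact with equality).

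There is really no obstacle here; the only thing to watch is the degenerate case $|v| = 0$. In that case $c|x|^2 = d|y|^2 = 0$ (from $|v|^2 = c^2|x|^2 + d^2|y|^2$ with $x\perp y$), and the stated inequality should be read as vacuous or handled by convention. Alternatively, one can avoid calculation altogether by the geometric observation that $\min_\lambda |u - \lambda v|$ is the distance from $u$ to the line $\R v$, which equals the area of the parallelogram spanned by $u, v$ divided by $|v|$; since $u \wedge v = \det(A)\, (x \wedge y)$ and $|x \wedge y| = |x|\,|y|$ by orthogonality, this area equals $|\det(A)|\,|x|\,|y|$, yielding the same bound.
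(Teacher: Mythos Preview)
Your proof is correct and follows essentially the same approach as the paper: both use the orthogonality of $x$ and $y$ to write $|u-\lambda v|^2=(a-\lambda c)^2|x|^2+(b-\lambda d)^2|y|^2$ and then minimize this quadratic in $\lambda$. The only cosmetic difference is that the paper first treats the case $|x|=|y|=1$ and then reduces the general case to it via the substitution $A\mapsto\begin{pmatrix}a|x|&b|y|\\ c|x|&d|y|\end{pmatrix}$, whereas you carry the factors $|x|^2,|y|^2$ through the minimization directly; your added geometric remark via $u\wedge v=\det(A)\,(x\wedge y)$ is a nice alternative not in the paper.
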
 

\begin{proof}
Consider first the case where $x$ and $y$ are unit vectors. Then 
\begin{align*}|u-\lambda v|^2&=(a-\lambda c)^2+(b-\lambda d)^2\\
&=(c^2+d^2)\lambda^2-2(ac+bd)\lambda+a^2+b^2.
\end{align*}
This is minimized when $\lambda=\frac{ac+bd}{c^2+d^2}$, and the minimum is
\[a^2+b^2-\frac{(ac+bd)^2}{c^2+d^2}=\frac{(ad-bc)^2}{c^2+d^2}\ ,\]
which proves the result. 

In the general case, we have that
\[\begin{pmatrix}a&b\\ c&d\\ \end{pmatrix}\begin{pmatrix}x\\ y\\ \end{pmatrix}=\begin{pmatrix}a|x|&b|y|\\ c|x|&d|y|\\ \end{pmatrix}\begin{pmatrix}x/|x|\\ y/|y|\\ \end{pmatrix}\ .\]
Using the case for unit vectors, we deduce that there exists $\lambda$ such that 
\[|u-\lambda v|^2\leq \frac{|x|^2|y|^2\det(A)^2}{c^2|x|^2+d^2|y|^2},\]  
and again the result is proved.
\end{proof}

Let us now apply this lemma with  $A=\begin{pmatrix}\alpha(\theta)&\beta(\theta)\\ \alpha(\phi)&\beta(\phi)\\ \end{pmatrix}, x=Py(\theta)$ and $y=Qy(\theta)$. Let $\begin{pmatrix}u'\\ v'\\ \end{pmatrix}=A\begin{pmatrix}x\\ y\\ \end{pmatrix}$ and let $u=n^{-1/2}\sign(y(\theta)), v=n^{-1/2}\sign(y(\phi))$. The approximate matrix equation proved earlier states that $|u-u'|\le \sigma$ and $|v-v'|\le \sigma$.
It follows from the lemma that 
there exists $\lambda$ such that 
\[\frac{|Py(\theta)|\,|Qy(\theta)|\,|\det(A)|}{|v'|}\ge|u'-\lambda v'| \ge |u-\lambda v| -(1+|\lambda|)\sigma.
\]
But we have shown that $n^{-1/2}|\sign(y(\theta))-\lambda\sign(y(\phi))|\ge\be^2/8$ for every $\lambda$. Since we also know that $|Py(\theta)|\leq 1$ and $|Qy(\theta)|\leq 1$, it follows that
\[|\det(A)| \ge (\be^2/8-(1+|\lambda|)\sigma)|v'|.
\]
From the proof of the last lemma it follows that the minimum distance is achieved when $\lambda=\frac{\alpha(\phi) \alpha(\theta)|x|^2+\beta(\phi) \beta(\theta) |y|^2}{|v'|^2}$. Recall also from the beginning of the section that $\alpha(\phi),\alpha(\theta),\beta(\phi),\beta(\theta) \le 2\eta^{-1}$. Since $|v'-v|\leq\sigma$, $v=n^{-1/2}\sign(y(\phi))$, and every $y(\phi)$ has support size at least $n/4$, we get that $|v'|\ge 1/2-\sigma$.

Hence 
\[|\lambda| \le \frac{4}{\eta^2(1/2-\sigma)^2}.
\]
 Assuming that 
\begin{align}
    \sigma\leq\be^2\eta^2/2^{10}
\end{align} 
we may deduce that $\sigma(1+|\lambda|)\leq\be^2/16$ and hence that $|\det A|\geq(1/2-\sigma)\be^2/16\geq\be^2/64$.


Let us rewrite the approximate matrix equation as
\[\begin{pmatrix}\alpha(\theta)&\beta(\theta)\\ \alpha(\phi)&\beta(\phi)\\ \end{pmatrix}\begin{pmatrix}Py(\theta)\\ Qy(\theta)\\ \end{pmatrix}=\begin{pmatrix}u'\\ v'\\ \end{pmatrix}\approx_\sigma n^{-1/2}\begin{pmatrix}\sign(y(\theta))\\ \sign(y(\phi))\end{pmatrix}.\]
Then
\[\begin{pmatrix}Py(\theta)\\ Qy(\theta)\\ \end{pmatrix}=\det(A)^{-1}\begin{pmatrix}\beta(\phi)& -\beta(\theta)\\ -\alpha(\phi)&\alpha(\theta)\\ \end{pmatrix}\begin{pmatrix}u'\\ v'\\ \end{pmatrix}.\]

Since the coefficients of $A$ have absolute value at most $2\eta^{-1}$,  it follows that both $Py(\theta)$ and $Qy(\theta)$ are linear combinations of $u'$ and $v'$ with coefficients of absolute value at most $128 \eta^{-1}\be^{-2}$. 
Using again the fact that $|u'-n^{-1/2}\sign(y(\theta))|$ and $|v'-n^{-1/2}\sign(y(\phi))|$ are both at most $\sigma$, it follows that both $Py(\theta)$ and $Qy(\theta)$ can be approximated to within $256 \eta^{-1}\beta^{-2}\sigma$ by the corresponding linear combinations of $n^{-1/2}\sign(y(\theta))$ and $n^{-1/2}\sign(y(\phi))$, and hence that $y(\theta)$ can be approximated to within $512\eta^{-1}\beta^{-2}\sigma$ by a linear combination of $n^{-1/2}\sign(y(\theta))$ and $n^{-1/2}\sign(y(\phi))$ with coefficients of absolute value at most $256\eta^{-1}\beta^{-2}$. 

Now recall that 
\[n^{-1/2}|P_E\sign(x(\theta))-P_E\sign(y(\theta))|\leq 2(c+\xi^{-2}\d^2)^{1/2},\]
and similarly for $\phi$, which implies in particular that $n^{-1/2}P_E\sign(y(\theta))$ and $n^{-1/2}P_E\sign(y(\phi))$ can be approximated to within $2(c+\xi^{-2}\d^2)^{1/2}$ by vectors whose coordinates take just the values $\pm n^{-1/2}$ on $E$. This is because $x(\theta)$ and $x(\phi)$ are typical, which implies, by the second condition in the definition of ``typical", that all coordinates of $P_E \sign(x(\theta))$ and $P_E \sign(x(\phi))$ are $\pm 1$.  
 
Putting together the bounds obtained in the last two paragraphs we get that $P_E y(\theta)$ can be approximated by a linear combination of $n^{-1/2}P_E \sign(x(\theta))$ and  $n^{-1/2}P_E \sign(x(\phi))$ to within $512\eta^{-1}\beta^{-2}\sigma + 1024 \eta^{-1}\beta^{-2} (c+\xi^{-2}\delta^2)^{1/2}$. Thus, $P_E y(\theta)$ can be approximated by a vector with at most four distinct coordinates.

 This in turn implies that $P_Ex(\theta)$ can be approximated to within 
$\delta +512 \eta^{-1}\beta^{-2}\sigma + 1024 \eta^{-1}\beta^{-2}(c+\xi^{-2}\delta^2)^{1/2}$ by such a vector.  But $|x(\theta)-P_Ex(\theta)|\leq\a$, 
 so we end up with the conclusion that $x(\theta)$ can be approximated to within 
$\alpha+\delta +512 \eta^{-1}\beta^{-2}\sigma + 1024\eta^{-1}\beta^{-2}(c+\xi^{-2}\delta^2)^{1/2}$ by a vector that takes at most five distinct values (the fifth value being zero).  But $x(\theta)\in Y$, so it is an $\e$-good point, which implies by Lemma \ref{closetoespace} that $d(x(\theta),PX)\leq 3\d+2\eta$ or $d(x(\theta),QX)\leq 3\d+2\eta$.
 Provided we have chosen our parameters in such a way that 
 \begin{align}\label{cond11}
    \alpha+4\delta +2\eta+512 \eta^{-1}\beta^{-2}\sigma + 1024\eta^{-1}\beta^{-2}(c+\xi^{-2}\delta^2)^{1/2} \le \gamma,
 \end{align} 
this contradicts the fact that $P$ was chosen to ensure that neither $PX_\g$ nor $QX_\g$ contains a vector with at most five distinct coordinates (see the end of Section \ref{outline} where this was shown to be possible).
 
\subsection{Case 3: $k\geq 5$.}

We begin with a simple lemma to estimate how well we can simultaneously approximate $k$ orthonormal vectors by a $(k-1)$-dimensional subspace.

\begin{lemma}\label{approxon}
Let $W$ be a $(k-1)$-dimensional subspace of $\R^n$ and let $u_1,\dots,u_k$ be an orthonormal sequence. Then there exists $i$ such that $d(u_i,W)\geq k^{-1/2}$. 
\end{lemma}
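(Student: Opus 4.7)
The plan is to prove this via an averaging/pigeonhole argument: rather than trying to find a single $u_i$ far from $W$ directly, I will bound the sum $\sum_{i=1}^k d(u_i,W)^2$ from below by $1$, and then conclude that some term must be at least $1/k$.

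First I would write $d(u_i,W)^2 = |u_i - P_W u_i|^2 = 1 - |P_W u_i|^2$, using orthonormality of the $u_i$ together with the Pythagorean identity. Summing gives
\[\sum_{i=1}^k d(u_i,W)^2 = k - \sum_{i=1}^k |P_W u_i|^2.\]
The main step is then to show $\sum_{i=1}^k |P_W u_i|^2 \leq k-1$. For this I would fix an orthonormal basis $w_1,\dots,w_{k-1}$ of $W$ and expand
\[\sum_{i=1}^k |P_W u_i|^2 = \sum_{i=1}^k \sum_{j=1}^{k-1} \langle u_i,w_j\rangle^2 = \sum_{j=1}^{k-1}\sum_{i=1}^k \langle u_i,w_j\rangle^2.\]
Since $u_1,\dots,u_k$ is an orthonormal sequence, Bessel's inequality gives $\sum_{i=1}^k \langle u_i,w_j\rangle^2 \leq |w_j|^2 = 1$ for each $j$, and summing over $j$ yields the bound $k-1$.

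Combining these two displays produces $\sum_{i=1}^k d(u_i,W)^2 \geq 1$, so by averaging there is some index $i$ with $d(u_i,W)^2 \geq 1/k$, which is exactly the claim. No step looks delicate; the only thing to be a little careful about is the direction of the Bessel inequality (one must apply it to the fixed vector $w_j$ expanded against the orthonormal system $u_1,\dots,u_k$, rather than the other way around, which would only give $\sum_j \langle u_i,w_j\rangle^2 \leq 1$ and not what we want after summing).
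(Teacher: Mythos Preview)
Your proof is correct and uses the same underlying averaging idea as the paper: show $\sum_{i=1}^k d(u_i,W)^2 \geq 1$ and then pigeonhole. The only difference is cosmetic. The paper first reduces without loss of generality to $n=k$, picks a unit vector $v$ orthogonal to $W$, notes that $d(u_i,W)=|\langle u_i,v\rangle|$, and applies Parseval to get $\sum_i \langle u_i,v\rangle^2 = 1$; you instead stay in $\R^n$, expand $|P_W u_i|^2$ via an orthonormal basis of $W$, and use Bessel to obtain $\sum_i |P_W u_i|^2 \leq k-1$. These are dual computations, and your version has the small advantage of not requiring the dimension-reduction step.
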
 

\begin{proof}
Without loss of generality $n=k$. Now let $v$ be a unit vector orthogonal to $W$. Then the orthogonal projection $P_W$ to $W$ is given by the formula $P_W(x)=x-\langle x,v\rangle v$, from which it follows that $d(x,W)=|\langle x,v\rangle|$. But $\sum_{i=1}^k\langle u_i,v\rangle^2=1$, so there must exist $i$ such that $|\langle u_i,v\rangle|\geq k^{-1/2}$, which proves the lemma.
\end{proof}

Now, with the help of the assumption that $k\geq 5$, we prove that we cannot find a 4-dimensional subspace that approximately contains all the vectors in $\Sigma$. For convenience, let us reorder the coordinates in such a way that $E=\{1,2,\dots,m\}$ and $0\leq\phi_1\leq\phi_2\leq\dots\leq\phi_m<2\pi$. Then for every typical vector $x(\theta)$, the set of $i$ such that $x(\theta)_i>0$ is an interval mod $m$.

\begin{lemma}\label{distance}
Let $W$ be a 4-dimensional subspace of $X$. Then $\Sigma$ contains a vector $u$ such that $d(u,W)\geq\be/2\sqrt 5$.
\end{lemma}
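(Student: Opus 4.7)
The plan is to argue by contradiction: suppose every $u\in\Sigma$ has $d(u,W)<\beta/(2\sqrt 5)$. Since $k\geq 5$, the set $V$ contains five antipodal pairs $\pm v_1,\dots,\pm v_5$ in $\Sigma$. Writing $w_i=P_Wv_i\in W$ and $q_i=v_i-w_i\in W^\perp$, we have $|q_i|<\beta/(2\sqrt 5)$. Since $w_1,\dots,w_5$ are five vectors in the $4$-dimensional subspace $W$, they are linearly dependent, so there exist scalars $c_1,\dots,c_5$ with $\sum c_i^2=1$ and $\sum c_iw_i=0$. Combining $\sum c_iv_i=\sum c_iq_i$ with the Cauchy--Schwarz inequality then gives
\[
\Bigl|\sum_i c_iv_i\Bigr|=\Bigl|\sum_ic_iq_i\Bigr|\leq\Bigl(\sum c_i^2\Bigr)^{1/2}\Bigl(\sum|q_i|^2\Bigr)^{1/2}<\sqrt{5}\cdot\frac{\beta}{2\sqrt{5}}=\frac{\beta}{2}.
\]

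To finish I would establish the matching lower bound $|\sum c_iv_i|\geq\beta/2$ for every unit $c$, which amounts to showing that the smallest eigenvalue of the $5\times 5$ Gram matrix $G$ of $v_1,\dots,v_5$ is at least $\beta^2/4$. This is the $(k,k-1)=(5,4)$ analogue of Lemma~\ref{approxon} -- the proof of that lemma being essentially Parseval for an orthonormal system -- and one wants to recover the same conclusion under the weaker hypothesis that the $v_i$ are $\beta$-separated in both the additive and subtractive senses (using that $V$ is centrally symmetric). All elements of $\Sigma$ have the common squared norm $r^2=|E|/n$, and the pairwise separations give $|\langle v_i,v_j\rangle|\leq r^2-\beta^2/2$ for $i\neq j$, which is the Gram-matrix input needed.

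I expect the main obstacle to be this eigenvalue lower bound, because five unit vectors pairwise antipodally $\beta$-separated can in principle lie in a $4$-dimensional subspace and make $\lambda_{\min}(G)=0$, so a naive diagonal-dominance bound cannot suffice on its own. To overcome this I would use the interval-sign-vector structure noted just before the lemma statement: after reordering coordinates by $\phi_1\leq\dots\leq\phi_m$, every $v\in\Sigma$ is of the form $v_I=n^{-1/2}(\mathbf{1}_I-\mathbf{1}_{E\setminus I})$ for a cyclic interval $I\subseteq\{1,\dots,m\}$, and $\langle v_I,v_J\rangle=n^{-1}(m-2|I\triangle J|)$. This reduces the eigenvalue question to a combinatorial question about pairwise symmetric differences of five cyclic intervals, and I would use the $\beta$-net property of $V$ in $\Sigma$, combined with the extra flexibility from having $k\geq 5$ pairs (not just exactly five), to select five intervals from $\Sigma$ whose pairwise symmetric differences are balanced enough to force $\lambda_{\min}(G)\geq\beta^2/4$. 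The combinatorial selection step is where I expect the bulk of the work to lie.
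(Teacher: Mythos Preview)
Your reduction to a smallest-eigenvalue bound for the Gram matrix is correct in spirit, and you are right that naive diagonal dominance cannot work (five pairwise $\beta$-separated unit vectors can perfectly well span only a $4$-plane). But the proposal stops exactly at the hard step: you never actually establish $\lambda_{\min}(G)\geq\beta^2/4$, and the ``combinatorial selection'' you sketch is left entirely unspecified. As written, this is a plan rather than a proof.

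The paper closes this gap with one clean observation that bypasses the Gram-matrix eigenvalue question altogether. Order the five chosen vectors by their angles $\theta_1<\dots<\theta_5$ in $[0,\pi)$. Because of the cyclic-interval structure you already noted, the support of $u_{j+1}-u_j$ is exactly the pair of arcs $[a_{j+1},a_j)\cup(b_j,b_{j+1}]$, and these supports are pairwise disjoint for $j=1,\dots,4$; the remaining coordinates are precisely the support of $u_1+u_5$. Hence the five vectors
\[
u_2-u_1,\quad u_3-u_2,\quad u_4-u_3,\quad u_5-u_4,\quad u_1+u_5
\]
are mutually \emph{orthogonal}, and each has norm at least $\beta$ by the separation hypothesis. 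Normalizing and applying Lemma~\ref{approxon} directly gives some $j$ with $d(u_{j+1}-u_j,W)\geq\beta/\sqrt{5}$ (or the analogous statement for $u_1+u_5$), and then the triangle inequality puts one of the two $u$'s at distance $\geq\beta/(2\sqrt5)$ from $W$.

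So the missing idea is: do not try to lower-bound $\lambda_{\min}$ of the Gram matrix of the $v_i$ themselves; instead pass to the consecutive differences (and one sum), which the interval structure makes genuinely orthogonal, and apply Lemma~\ref{approxon} there. This is both shorter and avoids the obstruction you correctly flagged.
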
 
\begin{proof}
Let $0\leq\theta_1<\dots<\theta_5<\pi$ be such that the points $u_j=n^{-1/2}P_E\sign(x(\theta_j))\in\Sigma$ and that together with the points $-u_j$ form a $\be$-separated subset of $\Sigma$. For each $j\in \{1,2,\dots,5\}$ let $[a_j,b_j]$ be the interval mod $m$ of $i$ such that $(u_j)_i=n^{-1/2}$. 

Note that $a_1\geq\dots\geq a_5$ and $b_1\geq\dots\geq b_5$, where here we refer to the cyclic ordering on the integers mod $m$. It follows that for each $j$, $(u_{j+1}-u_j)_i= 2n^{-1/2}$ on the interval $[a_{j+1},a_j)$, $- 2n^{-1/2}$ on the interval $(b_{j+1},b_j]$, and zero everywhere else. 
In particular, the vectors of the form $u_{j+1}-u_j$ for $j=1,2,3,4$, together with the vector $u_1+u_5$, are orthogonal.  

Since we have that $|u_{i}\pm u_j|\geq\be$ for every $i\neq j$,
setting $v_j=\frac{u_{j+1}-u_j}{|u_{j+1}-u_j|}$ for $j=1,\ldots ,4$ and $v_5=\frac{u_1+u_5}{|u_1+u_5|}$, we may deduce from Lemma \ref{approxon} that $d(v_j,W)\geq 1/\sqrt 5$ for some $j$. If $j=1,\ldots, 4$ then this implies that $d(u_{j+1}-u_j,W)\geq\be/\sqrt 5$, which implies that either $d(u_{j+1},W)\geq\be/2\sqrt 5$ or $d(u_j,W)\geq\be/2\sqrt 5$.  Similarly, if $j=5$ we get that either $d(u_1,W)\ge \be/2\sqrt 5$ or $d(u_5,W)\ge \be/2\sqrt 5$. 
This proves the lemma.
\end{proof}

Let $u=P_E\sign(x(\theta))$ be given by Lemma \ref{distance}. Recall that (with parameters that satisfy condition \eqref{cond11}) we have
\[n^{-1/2}|P_E\sign(x(\theta))-P_E\sign(y(\theta))|\leq\be^2/8.\] 
Recall also that
\[n^{-1/2}\sign(y(\theta))\approx_{\d/\eta}\alpha(\theta) Py(\theta)+\beta(\theta) Qy(\theta)\]
for some coefficients $\alpha(\theta),\be(\theta)$ that have absolute values at most $2\eta^{-1}$. We also know that $|y(\theta)-x(\theta)|\leq\d$. It follows that
\[n^{-1/2}P_E\sign(x(\theta))\approx_{\be^2/8+\d/\eta+4\d/\eta}\alpha(\theta) P_EPx(\theta)+\beta(\theta) P_EQx(\theta).\]

It follows that the distance from $n^{-1/2}P_E\sign(x(\theta))$ to the subspace $P_E(PY+QY)$, which has dimension at most 4, is at most $\be^2/8+5\d/\eta$.  If we pick parameters in such a way that 
\begin{align}\label{cond12}
\frac{\be^2}8+\frac{5\d}{\eta}<\frac{\be}{2\sqrt{5}},
\end{align} 
then this contradicts Lemma \ref{distance}.

\subsection{Choosing parameters}
We conclude by showing that there exists a choice of parameters which fulfils all the conditions.  We are not optimizing this choice. 

First, recall that we have already chosen $\gamma$ in the Corollary \ref{pickgamma} to be $2^{-37}$. Further we see that $\beta=2^{-6}$ satisfies \eqref{cond5} and \eqref{cond8}. We can further choose $\eta=2^{-40}$, $\alpha=2^{-40}$, $\rho=2^{-40}$ and then $c=2^{-205}$, $\xi=2^{-403}$ and finally $\delta=2^{-506}$. It is easy to check that these parameters meet all the conditions.

This finishes the proof that the $2$-Euclidean norm defined in \eqref{definition of the norm}, for $n\ge 35$ (which comes from our choice of $\gamma$ and the condition in Lemma \ref{subspacevol} and the fact that we use a $2\gamma$-expansion), contains no 2-dimensional subspace which is both strongly $(1+\epsilon)$-complemented and strongly $(1+\epsilon)$-Euclidean with $\epsilon=\delta^2/8C^2 =2^{-1017}$. (the dependence of $\e$ on $\delta$ was established in the proof of Lemma \ref{goodpoints}). 



\nocite{*}
\bibliography{bibfile} 
\bibliographystyle{abbrv}

\end{document}